\newenvironment{keyword}{\noindent\textbf{Keywords: }}{

}
\newtheorem{Definition}{Definition}[section]
\newtheorem{Lemma}[Definition]{Lemma}
\newtheorem{Remark}{Remark}
\newtheorem{Theorem}[Definition]{Theorem}
\newtheorem{Corollary}[Definition]{Corollary}
\newcommand{\rieszIso}{{\mathcal R}}				
\newcommand{\linFuncSpace}{\mathcal L}				
\newcommand{\ts}[1]{^{(#1)}}					
\newcommand{\vecb}[1]{\boldsymbol{#1}}				
\newcommand{\normvec}{\vecb n}					
\newcommand{\dx}{\,d\vecb x}					
\newcommand{\drx}{\,d\omega(\vecb x)}				
\newcommand{\inverseTrans}[1]{(\Phi\ts{#1})^{-1}}
\newcommand{\transUN}[2]{\Phi\ts{#1}\!\{#2\}}
\newcommand{\transUn}[1]{\transUN{n}{#1}}
\newcommand{\inverseTransUn}[1]{(\transUn{#1})^{-1}}
\renewcommand{\phi}{\varphi}
\newcommand{\eps}{\varepsilon}
\renewcommand{\rho}{\varrho}
\renewcommand{\epsilon}{\varepsilon}
\renewcommand{\theta}{\vartheta}
\let\Circ\circ
\renewcommand{\circ}{\!\Circ\!}
\newcommand{\<}{\langle}
\renewcommand{\>}{\rangle}
\DeclareMathOperator*{\Id}{Id}
\DeclareMathOperator*{\trace}{Tr}
\DeclareMathOperator*{\Div}{div}
\newcommand{\minCont}{\ensuremath{(\mathcal P)\,}}
\newcommand{\titleName}{Adjoint-Based Optimal Control of Time-Dependent Free Boundary Problems}
\title{\titleName}
\author{Jan Marburger$^*$}
\address[*]{Fraunhofer-Institut f\"ur Techno- und Wirtschaftsmathematik\\
	Fraunhofer-Platz 1 \\
	D-67663 Kaiserslautern \\
	e-mail: jan@nit-service.de}
\begin{document}
\bibliographystyle{plain} 

\begin{abstract}
In this paper we show a simplified optimisation approach for free boundary problems in arbitrary space dimensions.
This approach is mainly based on an extended operator splitting which allows a decoupling of the domain deformation and solving the remaining partial differential equation.
First we give a short introduction to free boundary problems and the problems occurring in optimisation. 
Then we introduce the extended operator splitting and apply it to a general minimisation subject to a time-dependent scalar-valued partial differential equation.
This yields a time-discretised optimisation problem which allows us a quite simple application of adjoint-based optimisation methods.
Finally, we verify this approach numerically by the optimisation of a flow problem (Navier-Stokes equation) and the final shape of a Stefan-type problem.
\end{abstract}

\maketitle

\begin{keyword}
 Optimal control, constraint optimisation, shape optimisation, adjoint approach, evolution equation, free boundary problem, free surface flow, Stefan-type problem
\end{keyword}

\pagestyle{myheadings}
\thispagestyle{empty}
\markboth{Jan Marburger}{\titleName}

\section{Introduction}
Optimisation of free surface problems \cite{colli2004free,shyy2007computational} often occurs in industrial applications. Some examples are the stabilisation of a liquid surface for sloshing \cite{ibrahim2005liquid} or optimising the shape of solidification processes \cite{hinze2007control}.
Free surface problems are still challenging from an analytical as well as a numerical point of view. Here, the domain is an unknown of the equation system which depends on the states, e.g.\ a water surface is driven by the flow velocity.
Since these problem are already hard to handle, the optimisation of such processes is very complex. Especially adjoint-based approaches \cite{Troel} are very difficult to apply due to the state-dependent domain.
Here, several assumptions and methods were derived to handle this kind of problem. For special cases it is possible to describe the free boundary by a graph \cite{dissSabine} or introducing a level-set or phase field function \cite{bernauer2010optimal}. Another approach is the pullback of the time- and state-dependent domain to a reference domain and perform all calculations in there. From an optimisation point of view, all of these methods have the disadvantage of very complex derivatives describing the variation of the domain. Note that often these derivatives are, in contrast to stationary problems, hard to interpret for time-dependent problems.

In this paper we show a simplified optimisation approach for free boundary problems in arbitrary space dimensions which bases on an extended time-discretisation of the problem.
We consider the problem: Minimise $J(y,\Omega,u)$ subject to the free surface problem of finding $(y,\Omega)$ such that
\begin{align}
  \begin{aligned}
  \partial_t y(t) + A(t)y(t) &=  f(u(t))  &&\mbox{in }\Omega(t) \\
  B(t)y(t) &= g(u(t))          &&\mbox{on }\Gamma(t)
  \end{aligned}
  \hspace{2cm}
  \begin{aligned}
   \begin{aligned}
     y(0) &= y_0  \quad\mbox{in }\Omega(0) \\
     &C(y,\Omega) = 0
   \end{aligned}
 \end{aligned}
 \label{equ:domTimeDep}
\end{align}
holds for all $t\in (0,T]$.
Here, $\Omega(t)\subset\mathbb R^d$ denotes the time- and state-dependent domain, $y$ a scalar-valued function,  $A$ an arbitrary differential operator and $C(y,\Omega)$ a constraint function defining the free boundary.
Moreover, $B$ and $g:\mathbb R\to\mathbb R$ denote appropriate boundary conditions, $u$ a control function and $f:\mathbb R\to\mathbb R$ a right hand side term depending on $u$, e.g.\ a localisation function. The difficulty is the dependency of the domain $\Omega(t)$ on the solution $y$ of the partial differential equation.
To obtain the domain, we solve, roughly speaking, a minimisation problem for each time step $t\in(0,T]$ in order to fulfil the constraint for the free boundary.
Hence, the minimisation of the cost functional $J$ would be subjected to the minimisation of the constraint function $C$ in order to solve the state equation 
\eqref{equ:domTimeDep}.

\begin{figure}
 \centering
 \begin{minipage}[t]{70mm}
  \includegraphics[width=.98\textwidth]{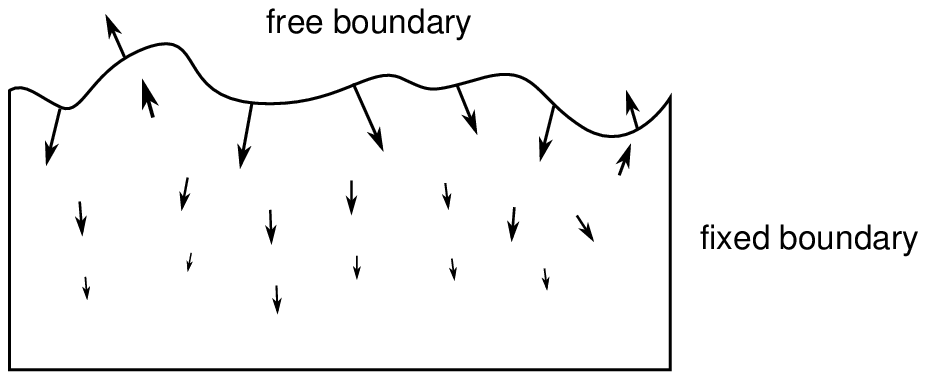}
  \caption{Artificial convection. The arrows illustrate the flux function $F$ which provides the deformation field to generate the new domain or 
  transformation $\Phi$.}
  \label{fig:convection}
 \end{minipage}
 \hspace*{10mm}
 \begin{minipage}[t]{70mm}
  \includegraphics[width=.98\textwidth]{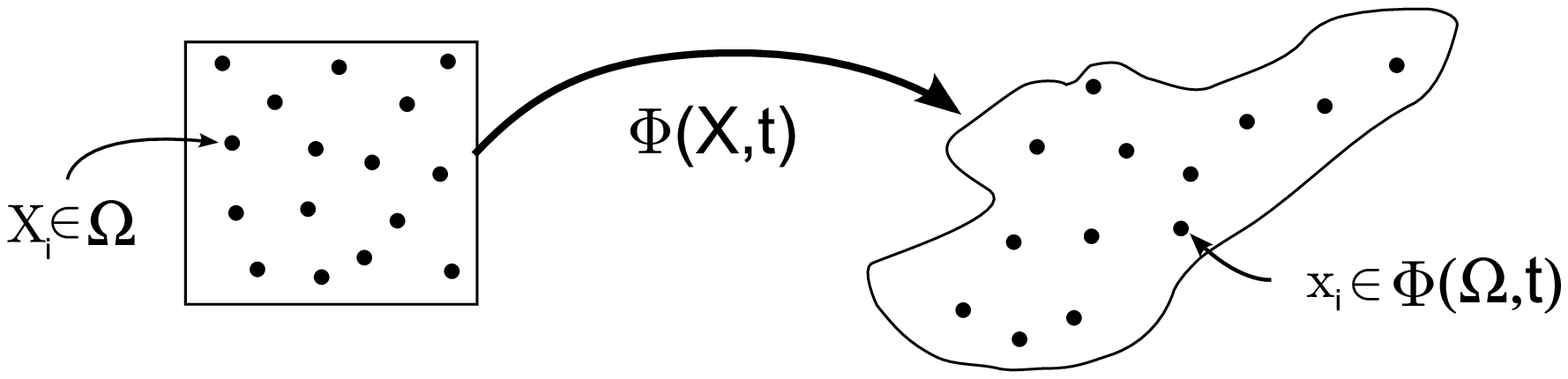}
  \caption{Pullback to a reference domain by the transformation $\Phi$. Capital letters, e.g.\ $X$ denote coordinates in the reference domain (left)
  		   and lower case letters, e.g.\ $x$, denote coordinates in the transformed domain.}
  \label{fig:transformation}
 \end{minipage}
\end{figure}

In order to avoid the minimisation problem for solving the state equation, we perform a complete pullback of equation \eqref{equ:domTimeDep} to a reference domain $\hat\Omega$. For this, we introduce a flux function $F:\mathbb R\to \mathbb R^d$ depending on the state $y$. This function is given by a characteristic velocity (e.g.\ flow) or a smooth continuation of the boundary motion, cf.\ figure~\ref{fig:convection}. Together with the transformation $\Phi$ given by
\begin{align*}
 \partial_t \Phi = F(y\circ\Phi) \quad\mbox{in }\hat\Omega\times(0,T)
 \qquad\mbox{ with }\qquad
 \Phi(X,0) = X \quad\mbox{in }\hat\Omega
\end{align*}
we resolve the constraint function for the domain by $\Omega(t)=\Phi(\hat\Omega,t)$ and hence $C(y,\Phi(\hat\Omega)) = 0$ holds, see figure~\ref{fig:transformation}. 
Therefore, we reformulate the original minimisation problem of the cost function $J$ to: Minimise $\hat J(\hat y,\Phi, u)$ subject to
\begin{align}
 \begin{aligned}
  \partial_t \hat y + \hat A(\Phi) \hat y &= f(u\circ\Phi)  && \mbox{in } \hat\Omega \times (0,T) \\
  \hat B(\Phi) \hat y &= g(u\circ\Phi)  && \mbox{in } \hat\Gamma \times (0,T)\\
  \hat y(0) &= y_0  &&\mbox{in }\hat\Omega
 \end{aligned}
 \hspace{2cm}
 \begin{aligned}
  \partial_t \Phi &= F(\hat y)  &&\mbox{in }\hat\Omega \times (0,T) \\
  \Phi(0) &= \Id{}_{\hat\Omega} &&\mbox{in }\hat\Omega \\
 \end{aligned}
 \label{equ:domPullback}
\end{align}
with $\hat y :=\! y\Circ\Phi$ and $\Id{}_{\hat\Omega}(x):=x$ as the identity map in $\hat\Omega$. Note that also the operators $A$ and $B$ change to $\hat A(\Phi)$ and $\hat B(\Phi)$, respectively.
Now the domain is fixed but the dependency of the differential operator on $\Phi$ yields very complex derivatives.

In the following we derive a simplified optimisation approach for free boundary problems using an extended time discretisation which is based on an operator splitting.
This approach is a blend of the original \eqref{equ:domTimeDep} and transformed formulation \eqref{equ:domPullback} of the optimisation problem.


\section{Formulation of the State Equation}
In this section we reformulate the state equation by a time-discretisation. Particularly this is done by applying an operator splitting scheme, which allows a simplified optimisation approach later on.
First, we show a first order splitting scheme for a simple convection-diffusion equation in $\mathbb R^{d}$ and transfer the results to the state equation \eqref{equ:domTimeDep}. Finally, we introduce an appropriate time-discrete Hilbert space which simplifies the application of adjoint-based optimisation.
\subsection{Basics of Operator Splitting}
In the following we use a Yanenko splitting \cite{splittingMethods} which is illustrated by a simple convection-diffusion equation in $\mathbb R^{d}$
\begin{align*}
 \partial_t y + \vecb v \cdot\nabla y - \Delta y &= f
 \quad\mbox{in }\mathbb R^d\times(0,T]
 &\mbox{with}&&
 y(0) &= y_0 \quad\mbox{in }\mathbb R^d
\end{align*}
where $y:\mathbb R^{d}\to \mathbb R$ and $\vecb v:\mathbb R^d\to\mathbb R^d$ denotes a smooth vector field. Note that the velocity field can also depend on $y$. This problem is divided into two subproblems. For the first time interval $[0,\tau]$ the Yanenko splitting reads
\begin{align*}
 \begin{aligned}
  \partial_t y^* + \vecb v\cdot\nabla y^* &= 0 
  &&\mbox{in }\mathbb R^d\times(0,\tau] \\
  y^*(0) &= y_0
  &&\mbox{in }\mathbb R^d
 \end{aligned}
 \quad\rightarrow\quad
 \begin{aligned}
  \partial_t y - \Delta y &= f  &&\mbox{in }\mathbb R^d\times(0,\tau] \\
  y(0) &= y^*(\tau)  &&\mbox{in }\mathbb R^d
 \end{aligned}
\end{align*}
The resulting subproblems can be solved by different methods. 
In particular, we consider the convection part from a Lagrangian viewpoint, cf.\ \cite{GODL1996}, which yields
\begin{align*}
 \begin{aligned}
  \partial_t \hat y^* &= 0		&&\mbox{in } \mathbb R^d\times(0,\tau] \\
  \hat y^*(0) &= y_0 			&&\mbox{in } \mathbb R^d
 \end{aligned}
 \hspace*{2cm}
 \begin{aligned}
  \partial_t \Phi &= \vecb v\circ\Phi	&&\mbox{in }\mathbb R^d \times(0,\tau] \\
  \Phi(X,0) &= X			&&\mbox{in }\mathbb R^d
 \end{aligned}
\end{align*}
for $\hat y^* := y^*\circ \Phi$. Thus the solution $y^*$ at time $\tau$ is given by
\begin{align*}
 y^*(\Phi(X,\tau),\tau) = \hat y^*(X,\tau) &= y_0(X) 
\end{align*}
and hence $y^*(\tau) = y_0(\Phi^{-1}(\tau))$ where $\Phi^{-1}$ denotes the inverse of $\Phi$.
The diffusion part is written as
\begin{align*}
 \partial_t y - \Delta y &= f		&&\mbox{in }\Phi(\mathbb R^d,\tau)\times(0,\tau] \\
 y(0) &= y_0(\Phi^{-1}(\tau))		&&\mbox{in }\Phi(\mathbb R^d,\tau)
\end{align*}
Note that $\Phi(\mathbb R^d,\tau) = \mathbb R^d$ but the metric, needed for example for spatial operators, is induced by the transformation $\Phi$.
Applying this scheme iteratively, we solve the convection-diffusion equation for all time intervalls. 
Finally, a time-discretisation is performed. Since the Yanenko splitting is of order $\mathcal O(\tau)$, a first order scheme is sufficient.
\begin{align*}
 \Phi\ts{n} &= \Id{}_{\Omega\ts{n}} + \tau \vecb v(t\ts{n})
 &&\mbox{in }\Omega\ts{n} \\
 \frac1\tau\big( y\ts{n+1} - y\ts{n}\circ(\Phi\ts{n})^{-1} \big) - \Delta y\ts{n+1} &= f(t\ts{n+1})
 &&\mbox{in }\Omega\ts{n+1}
\end{align*}
with $\Omega\ts{n+1} = \Phi\ts{n}(\Omega\ts{n})$.
This approach, i.e.\ solving the convection part by a Lagrangian viewpoint, is the main principle of particle methods, cf.\ \cite{MyPhd}, which will be used for the numerical results later on.

\begin{Remark}
 Due to the definition, the continuous transformation $\Phi$, given by \eqref{equ:domPullback}, is a diffeomorphism, cf.\ \cite{Raviart2}. For small time steps also the time-discrete transformations $\Phi\ts{n}$ are diffeomorphisms. 
 This implies that $\det(\nabla\Phi\ts{n}) > 0$ is always satisfied.
\end{Remark}


\subsection{Time Discretisation of the State Equation}
\label{sec:timeDiscr}
Now we apply the above splitting scheme to the state equation \eqref{equ:domTimeDep}.
For this, we extend \eqref{equ:domTimeDep} by a convection part given by a smooth continuation of the boundary motion. Particularly, we obtain
\begin{align*}
  \begin{aligned}
   \partial_t y(t) + F(y(t))\cdot\nabla y(t) - F(y(t))\cdot\nabla y(t) + A(t) y(t) &= f(u(t)) \qquad &&\mbox{in }\Omega(t) \\
   B(t)y(t) &= g(u(t))        &&\mbox{on }\partial\Omega(t) \\
   y(0) &= y_0             &&\mbox{in }\Omega(0)
  \end{aligned}
\end{align*}
for all $t\in (0,T]$. 
Here the flux function $F:\mathbb R\to\mathbb R^d$ defines the deformation of the entire domain, cf. equation \eqref{equ:domPullback} and figure~\ref{fig:convection}.
Applying the splitting scheme of the previous section to this equation we obtain the time-discrete system
\begin{align}
 \label{equ:timediscrState}
 \begin{aligned}
 \Phi\ts{n} = \Id{}_{\Omega\ts{n}} &+ \tau F(y\ts{n})   &&\mbox{in }\overline{\Omega\ts{n}}\\
 \frac1\tau\big(y\ts{n+1} - y\ts{n}\circ(\Phi\ts{n})^{-1} \big) 
  - \big(F(y\ts{n})\cdot\nabla y\ts{n}\big)\circ(\Phi\ts{n})^{-1} + A(t\ts{n+1})y\ts{n+1} &= f(u\ts{n+1})
 &&\mbox{in }\Omega\ts{n+1}\\
 B(t\ts{n+1}) y\ts{n+1} &= g(u\ts{n+1})
 &&\mbox{on }\partial\Omega\ts{n+1}
 \end{aligned}
\end{align}
with $\Omega\ts{n+1} := \Omega(t\ts{n+1}) = \Phi\ts{n}(\Omega\ts{n})$.
Note that the transformation, which generates the new domain, is solved explicitly. For this reason it is sufficient to treat the artificial convection term $F(y)\cdot\nabla y$ also explicitly in the above equation. Moreover, the transformation $\Phi\ts{n}$ depends on $y\ts{n}$ only. Hence we treat $\Phi\ts{n}$ as function of $y\ts{n}$ given by
\begin{align}
 \label{equ:transUn}
 \transUn{y} := \Id{}_{\Omega\ts{n}} + \tau F(y\ts{n})
\end{align}
in the following.
To consider weak formulations later on, we introduce the function space
\begin{align}
 \label{equ:def:hilbertSpace}
 V := \prod_{n=0}^{N_t} V\ts{n}
\end{align}
where $V\ts{n}$ denotes the spatial space at time $t\ts{n}$, for instance $V\ts{n} := H^1(\Omega\ts{n})$. These spaces are built recursively, that is,
\begin{align*}
 V\ts{0} \xrightarrow{\Phi\ts{0}} V\ts{1} \xrightarrow{\Phi\ts{1}} V\ts{2} \xrightarrow{\Phi\ts{2}} \cdots
\end{align*}
and depend on the data, e.g.\ right hand side or boundary conditions. 
If $V\ts{n}$ are Hilbert spaces, we define the corresponding inner product by
\begin{align}
 \label{equ:VinnerProd}
 \< x,y \>_V := \sum_{n=0}^{N_t} \tau \< x\ts{n}, y\ts{n} \>_{V\ts{n}}
\end{align}
for $x,y\in V$.
Note that for small time steps every transformation is a diffeomorphism which maps from $\Omega\ts{n}$ to $\Omega\ts{n+1}$ as state in the above remark. Therefore, the domain does not become singular and hence if $V\ts{n}$ is a Hilbert space then also $V\ts{n+1}$ is a Hilbert space. Consequently, \eqref{equ:VinnerProd} is an inner product and hence also $V$ is a Hilbert space.


\section{Optimal Control Problem}
Let the state space $V$ and the space of Lagrange multipliers $Z$ be Hilbert spaces which are defined analogous to \eqref{equ:def:hilbertSpace}. Let the space of control functions $U$ be a Hilbert space.
Moreover, we introduce the space 
\begin{align*}
 H:=\prod_{n=0}^{N_t} H\ts{n}
 \qquad\mbox{ with }\qquad
 H\ts{n}:=L^2(\Omega\ts{n}).
\end{align*}
The index $\Gamma$ denotes the corresponding function space on the boundary, e.g.\ $H\ts{n}_\Gamma = L^2(\partial\Omega\ts{n})$.

\subsection{Weak Formulation}
We generalise equation \eqref{equ:timediscrState} in a weak sense using the function spaces defined above as
\begin{align}
 \label{equ:dualFormState}
 \frac1\tau\big(y\ts{n+1} - y\ts{n}\circ\inverseTransUn{y} \big) 
  - \big(F(y\ts{n})\cdot\nabla y\ts{n}\big)\circ\inverseTransUn{y} + A\ts{n+1}y\ts{n+1} &= \mathcal B\ts{n+1} u
\end{align}
in $(V\ts{n+1})^*$. The operator $A\ts{n}:V\ts{n}\to (V\ts{n})^*$ denotes the weak counterpart of $A$ and $B$ used in equation \eqref{equ:timediscrState} and 
$\mathcal B\ts{n}\in \linFuncSpace(U;(V\ts{n})^*)$ of $f$ and $g$.
For example, we obtain for the Laplacian $A\ts{n}y\ts{n}:=\Delta y\ts{n}$ with Neumann boundary, $f(u) = 0$ and $g(u) = u$
\begin{align*}
 \< A\ts{n}y\ts{n},\lambda\>_{(V\ts{n})^*,V\ts{n}} := -\int_{\Omega\ts{n}} \nabla y\ts{n}\cdot\nabla\lambda  \dx
 \qquad\mbox{and}\qquad
 \< \mathcal B\ts{n} u, \lambda\>_{(V\ts{n})^*,V\ts{n}} := \int_{\partial\Omega\ts{n}} u \lambda\drx
\end{align*}
for $\lambda\in V\ts{n}$.

\subsection{Minimisation Problem}
We consider the minimisation problem
\begin{flalign*}
 &\minCont\hspace*{3cm}
 \min_{(y,u)\in V\times U} J(y,u)
 \qquad\mbox{ subject to }\qquad
 e(y,u) = 0 &
\end{flalign*}
where $J: V\times U \to \mathbb R^+_0$ is a cost functional and $e:V\times U\to Z^*$ is determined by \eqref{equ:dualFormState}, i.e.\ 
\begin{align}
 \label{equ:econstraint}
 \begin{aligned} 
   \<e(y,u), &\lambda\>_{Z^*,Z} :=  
   \sum_{n=1}^{N_t} \Big[ \<y\ts{n},\lambda\ts{n}\>_{H\ts{n}} 
     + \tau \<A\ts{n}y\ts{n},\lambda\ts{n}\>_{(V\ts{n})^*,V\ts{n}} 
     - \tau \<\mathcal B\ts{n}u,\lambda\ts{n}\>_{(V\ts{n})^*,V\ts{n}} 
   \Big]\\
   &- \sum_{n=0}^{N_t-1} \Big[ \<y\ts{n}\circ\inverseTransUn{y},\lambda\ts{n+1}\>_{H\ts{n+1}} 
   + \tau \< (F(y\ts{n})\cdot\nabla y\ts{n})\circ\inverseTransUn{y},\lambda\ts{n+1} \>_{H\ts{n+1}}  \Big] \\
   &+ \<y\ts{0}-y_0,\lambda\ts{0}\>_{H\ts{0}} 
 \end{aligned}
\end{align}
Note that the constraint function $e$ does not depend on the domains $\Omega\ts{n}$ explicitly. The condition $\Omega\ts{n+1}=\Phi\ts{n}(\Omega\ts{n})$, needed to establish the space $V\ts{n+1}$, is given implicitly by the definition of $\Phi\ts{n}$.
\begin{Remark}
 On the one hand, the missing condition $\Omega\ts{n+1}=\Phi\ts{n}(\Omega\ts{n})$ in the above contraint function yields an underdetermined system. 
 On the other hand, the solution of the approach introduced in section \ref{sec:timeDiscr} satisfies the constraint function.
 This solution is used to establish the optimality condition and hence we assume the domains to be known.
 In standard approaches, fixing the domain would yield no information about the variation of the domain or the quantity discribing it, e.g.\ a tranformation.
 Here, this information is not fully lost as we still obtain information about domain variations by the push forward terms in the constraint and cost function.
\end{Remark}
The above minimisation problem is the time-discrete counterpart to the minimisation of $J(y,\Omega,u)$ stated in the very beginning. Here, all spatial dependencies, e.g.\ integration domains or evaluation positions, in the cost functional are replaced by the corresponding transformation $\transUn{y}$.
More details about the reformulation of the cost functional can be found in section \ref{sec:numRes} or \cite{MyPhd}.

To find a minimum of \minCont we use the Lagrangian multiplier theorem, that is, we determine the critical points of the Lagrange functional
$L:V\times U\times Z\to\mathbb R$ defined by
\begin{align*}
 L(y,u,\lambda) := J(y,u) + \<e(y,u),\lambda\>_{Z^*,Z}
\end{align*}
Then the Karush-Kuhn-Tucker system reads
\begin{align*}
 \<\partial_y L(y,u,\lambda), \psi_y\>_{V^*,V} = 0,
 \qquad
 \<\partial_u L(y,u,\lambda), \psi_u\>_{U^*,U} = 0
 \quad\mbox{and}\quad
 \<\partial_\lambda L(y,u,\lambda), \psi_\lambda\>_{Z^*,Z} = 0
\end{align*}
for all $\psi_y\in V$, $\psi_u\in U$ and $\psi_\lambda\in Z$. For more detail we refer to \cite{Troel}. The partial derivatives in $V$ and $Z$ are interpreted as
\begin{align*}
 \<\partial_y L(y,u,\lambda), \psi\>_{V^*,V} = \sum_{n=0}^{N_t} \tau \<\partial_{y\ts{n}} L(y,u,\lambda), \psi\ts{n}\>_{(V\ts{n})^*,V\ts{n}}
\end{align*}


\noindent The derivatives with respect to the control $u$ and the states $y\ts{n}$ are straight forward except for the push forward terms, whose derivatives are derived in the following.

\begin{Lemma}
 \label{lem:divDetGradPhi0}
 Let $\Omega\subset \mathbb R^d$ and $\Phi:\Omega\to\mathbb R^d$ be a smooth diffeomorphism. Then for $d=1,2,3$
 \begin{align*}
  \Div(\det(\nabla\Phi)\nabla\Phi^{-T}) = 0
 \end{align*}
 holds.
\end{Lemma}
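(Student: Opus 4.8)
The plan is to verify the identity componentwise, using the classical cofactor (adjugate) matrix rather than working with $\Phi^{-T}$ directly. First I would rewrite the expression using the identity $\det(\nabla\Phi)\,\nabla\Phi^{-T} = \adj(\nabla\Phi)^T = \cof(\nabla\Phi)$, the matrix of cofactors of $\nabla\Phi$; this removes the inverse and leaves a polynomial expression in the first derivatives of $\Phi$, so the statement becomes $\sum_j \partial_{x_j}\!\big(\cof(\nabla\Phi)_{ij}\big) = 0$ for each $i$. This is the well-known Piola identity, and the restriction to $d=1,2,3$ is just so that one can exhibit the cofactors explicitly; for $d=1$ the cofactor matrix is identically $1$ and the claim is trivial, so the real content is $d=2$ and $d=3$.

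The key step is then a direct computation. For $d=2$, writing $\Phi=(\Phi^1,\Phi^2)$, the cofactor matrix has entries like $\partial_{x_2}\Phi^2$, $-\partial_{x_1}\Phi^2$, etc., and the divergence of the $i$-th row collapses to a sum of the form $\partial_{x_1}\partial_{x_2}\Phi^k - \partial_{x_2}\partial_{x_1}\Phi^k$, which vanishes by equality of mixed partials (here smoothness of $\Phi$ is used). For $d=3$ the same mechanism works: each cofactor is a $2\times 2$ minor, e.g.\ $\cof(\nabla\Phi)_{ij}$ is (up to sign) $\partial_{x_b}\Phi^p\,\partial_{x_c}\Phi^q - \partial_{x_c}\Phi^p\,\partial_{x_b}\Phi^q$ for appropriate index choices, and upon differentiating and summing over $j$, each second-derivative term that appears does so twice with opposite signs and cancels, again by Schwarz's theorem. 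One can organise this cleanly by noting $\cof(\nabla\Phi)_{ij}$ can be written with the Levi-Civita symbol as $\tfrac1{(d-1)!}\,\epsilon_{i i_2\cdots i_d}\,\epsilon_{j j_2\cdots j_d}\,\partial_{x_{j_2}}\Phi^{i_2}\cdots\partial_{x_{j_d}}\Phi^{i_d}$, so that $\partial_{x_j}$ of it produces, by the product rule, terms each containing a factor $\epsilon_{j j_2\cdots j_d}\,\partial_{x_j}\partial_{x_{j_k}}\Phi^{i_k}$ that is symmetric in $(j,j_k)$ against an antisymmetric $\epsilon$, hence zero.

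I expect the only mild obstacle to be bookkeeping: getting the signs of the cofactors right and matching indices, especially in the $d=3$ case where there are six cofactor entries per row. There is no analytic difficulty — smoothness (so that second derivatives exist and commute) is the sole hypothesis actually used, and $\det(\nabla\Phi)>0$ from the preceding remark is not even needed for this identity, only to guarantee $\Phi^{-1}$ makes sense in the statement. An alternative, coordinate-free route would be to invoke the Piola identity from continuum mechanics directly, but since the paper only needs $d\le 3$, the explicit componentwise cancellation is the most self-contained argument and is what I would write out.
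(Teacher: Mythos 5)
Your proof is correct: rewriting $\det(\nabla\Phi)\nabla\Phi^{-T}$ as the cofactor matrix of $\nabla\Phi$ and checking row by row that the divergence vanishes by equality of mixed partials is exactly the classical Piola identity argument, and your Levi--Civita bookkeeping (symmetric second derivative contracted against the antisymmetric symbol) is sound; you are also right that the diffeomorphism hypothesis is only needed to make $\nabla\Phi^{-T}$ meaningful in the statement, not for the identity itself. The paper does not prove the lemma at all --- it refers to p.~27 of the author's thesis --- so your computation supplies precisely the standard self-contained argument that the paper omits.
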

\begin{proof}
 See \cite{MyPhd}, p. 27.
\end{proof}

\begin{Lemma}
 \label{lem:dPhiIntegralyLambda}
 Let $\Omega\subset\mathbb R^d$ and $\Phi$ be a smooth diffeomorphism. Moreover, let $y:\Omega\to\mathbb R$ and $\lambda:\Phi(\Omega)\to\mathbb R$ be sufficiently smooth. 
 Then
 \begin{align*}
  \partial_\Phi \Big( \int_{\Omega} y \,(\lambda\circ\Phi) \det(\nabla \Phi) \dx \Big) [\psi] =
  &\int_{\partial\Omega} y \,(\lambda\circ\Phi)\det(\nabla\Phi)\nabla\Phi^{-T} \normvec \cdot \psi \drx \\
   &- \int_\Omega  \det(\nabla\Phi) \nabla\Phi^{-T} \nabla y \,\lambda\circ\Phi \cdot\psi \dx
 \end{align*}
 holds for $\psi:\Omega\to\mathbb R^d$.
\end{Lemma}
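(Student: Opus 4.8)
The plan is to compute the Gâteaux derivative directly by perturbing $\Phi$, to rewrite the gradient of $\lambda$ via the chain rule, and then to integrate by parts, invoking Lemma~\ref{lem:divDetGradPhi0} to absorb the contribution coming from $\det(\nabla\Phi)$.

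First I would set $\Phi_\eps:=\Phi+\eps\psi$ and differentiate the integrand of $J(\Phi):=\int_\Omega y\,(\lambda\circ\Phi)\det(\nabla\Phi)\dx$ at $\eps=0$. Two terms appear: the chain rule gives $\partial_\eps(\lambda\circ\Phi_\eps)|_{\eps=0}=(\nabla\lambda\circ\Phi)\cdot\psi$, and Jacobi's formula gives $\partial_\eps\det(\nabla\Phi_\eps)|_{\eps=0}=\det(\nabla\Phi)\,\trace(\nabla\Phi^{-1}\nabla\psi)$. (Since $\lambda$ is only prescribed on $\Phi(\Omega)$, I would either extend it smoothly or note that for small $\eps$ only values on $\Phi_\eps(\Omega)$ enter; the blanket smoothness hypothesis covers this.) Hence
\[
 \partial_\Phi J[\psi] = \underbrace{\int_\Omega \det(\nabla\Phi)\,y\,(\nabla\lambda\circ\Phi)\cdot\psi\dx}_{=:T_1} \;+\; \underbrace{\int_\Omega y\,(\lambda\circ\Phi)\det(\nabla\Phi)\,\trace(\nabla\Phi^{-1}\nabla\psi)\dx}_{=:T_2}.
\]

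Next, in $T_1$ I would eliminate $\nabla\lambda$: the chain rule $\nabla(\lambda\circ\Phi)=\nabla\Phi^{T}\,(\nabla\lambda\circ\Phi)$ gives $(\nabla\lambda\circ\Phi)=\nabla\Phi^{-T}\nabla(\lambda\circ\Phi)$, and moving the matrix onto $\psi$ via $(\nabla\Phi^{-T}a)\cdot\psi=a\cdot(\nabla\Phi^{-1}\psi)$ yields, with the short-hand $W:=\det(\nabla\Phi)\,\nabla\Phi^{-1}\psi$ (i.e.\ $W=(\det(\nabla\Phi)\nabla\Phi^{-T})^{T}\psi$), after the product rule and the divergence theorem,
\[
 T_1 = \int_\Omega y\,\nabla(\lambda\circ\Phi)\cdot W\dx = \int_{\partial\Omega}(\lambda\circ\Phi)\,y\,W\cdot\normvec\drx - \int_\Omega(\lambda\circ\Phi)\,\nabla y\cdot W\dx - \int_\Omega(\lambda\circ\Phi)\,y\,\Div W\dx.
\]
Here Lemma~\ref{lem:divDetGradPhi0} enters: expanding $\Div W=\Div\big((\det(\nabla\Phi)\nabla\Phi^{-T})^{T}\psi\big)$, the part in which the derivative falls on the cofactor matrix $\det(\nabla\Phi)\nabla\Phi^{-T}$ vanishes by Lemma~\ref{lem:divDetGradPhi0} (row-wise divergence zero), and the remaining part equals $\det(\nabla\Phi)\,\trace(\nabla\Phi^{-1}\nabla\psi)$. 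Consequently $\int_\Omega(\lambda\circ\Phi)\,y\,\Div W\dx=T_2$, the two determinant terms cancel, and
\[
 \partial_\Phi J[\psi] = T_1+T_2 = \int_{\partial\Omega}(\lambda\circ\Phi)\,y\,W\cdot\normvec\drx - \int_\Omega(\lambda\circ\Phi)\,\nabla y\cdot W\dx.
\]
It then remains to move $\psi$ back out: $W\cdot\normvec=\det(\nabla\Phi)\nabla\Phi^{-T}\normvec\cdot\psi$ and $W\cdot\nabla y=\det(\nabla\Phi)\nabla\Phi^{-T}\nabla y\cdot\psi$, which gives precisely the asserted identity.

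The computation is routine once Jacobi's formula and the Piola-type identity of Lemma~\ref{lem:divDetGradPhi0} are at hand; the only genuine subtleties are keeping the transposes straight ($\nabla\Phi^{-1}$ versus $\nabla\Phi^{-T}$) and recognising in advance that one should integrate by parts in $T_1$ to trade $\nabla\lambda$ for $\nabla y$, and that the trace term produced by differentiating the determinant is exactly the one cancelled by $\Div W$. A small technical point is the regularity of $\lambda$ near $\partial(\Phi(\Omega))$ so that $\lambda\circ\Phi_\eps$ is well defined for small $\eps$; under the stated smoothness this is not an obstacle.
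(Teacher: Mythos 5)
Your proof is correct and follows essentially the same route as the paper: compute the derivative to get the chain-rule term plus the Jacobi/trace term, perform one integration by parts, and eliminate the cofactor-divergence contribution via Lemma~\ref{lem:divDetGradPhi0}. The only (cosmetic) difference is that you integrate by parts in the $\nabla\lambda$ term, moving the derivative off $\lambda\circ\Phi$, whereas the paper integrates by parts in the trace term, moving the derivative off $\psi$ --- the same identity read in the opposite direction, yielding the identical boundary term and cancellations.
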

\begin{proof}
 \begin{align*}
  &\partial_\Phi \Big(\! \int_{\Omega}\! y \,\lambda\circ\Phi \det(\nabla \Phi) \dx \Big) [\psi] =
   \int_\Omega y \,(D\lambda\circ\Phi)\psi\det(\nabla\Phi) + y\,\lambda\circ\Phi\det(\nabla\Phi) \nabla\Phi^{-T}:\nabla\psi \dx \\
  &= \int_\Omega y \,(D\lambda\circ\Phi)\psi\det(\nabla\Phi) 
   - \det(\nabla\Phi) \nabla\Phi^{-T} \nabla y \,\lambda\circ\Phi \cdot\psi
   - y \Div\big(\det(\nabla\Phi) \nabla\Phi^{-T}\big) \,\lambda\circ\Phi \cdot\psi \\
  &\qquad - y \det(\nabla\Phi) \nabla\Phi^{-T} \nabla\Phi^T(\nabla\lambda\circ\Phi) \cdot\psi \dx
   + \int_{\partial\Omega} y \,(\lambda\circ\Phi)\det(\nabla\Phi)\nabla\Phi^{-T} \normvec \cdot \psi \drx
  \intertext{by using integration by parts. Applying lemma \ref{lem:divDetGradPhi0} we obtain}
  &= \int_{\partial\Omega} y \,(\lambda\circ\Phi)\det(\nabla\Phi)\nabla\Phi^{-T} \normvec \cdot \psi \drx
   - \int_\Omega \det(\nabla\Phi) \nabla\Phi^{-T} \nabla y \,\lambda\circ\Phi \cdot\psi \dx
 \end{align*}
 by using $(D\lambda\circ\Phi)\psi = (\nabla\lambda\circ\Phi)\cdot\psi$
 and $\Div(aB)=a\Div(B) + B(\nabla a)$ for a scalar-valued function $a$ and a matrix-valued function $B$.
 
\end{proof}

\begin{Theorem}
 \label{thm:pushfw}
 Let $\phi:\Omega\ts{n}\to\mathbb R$ and $\lambda:\Omega\ts{n+1}\to\mathbb R$ be sufficiently smooth. Moreover, $F:y\mapsto F(y)$ denotes a vector field depending on $y\ts{n}$, $\transUn{y}$ is given by \eqref{equ:transUn} and $\Omega\ts{n+1}=\transUn{y}(\Omega\ts{n})$.
 Then 
 \begin{align*}
  D_{y\ts{n}} \Big(\int_{\Omega\ts{n+1}} (\phi\circ\inverseTransUn{y}) \lambda\dx \Big) [\psi]
   = & \int_{\partial\Omega\ts{n}} \tau\phi (\lambda\circ\transUn{y}) DF(y\ts{n}) \psi \cdot \normvec \dx \\
  &-\int_{\Omega\ts{n}} \tau(\lambda\circ\transUn{y}) \nabla\phi \cdot  DF(y\ts{n}) \psi \drx
  + \mathcal O(\tau^2)
 \end{align*}
 holds for $\psi:\Omega\ts{n}\to\mathbb R$. 
\end{Theorem}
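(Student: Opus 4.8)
The plan is to reduce the statement to Lemma~\ref{lem:dPhiIntegralyLambda} by a change of variables, then to differentiate through the affine dependence of $\transUn{y}$ on $y\ts{n}$ and discard the higher-order terms in $\tau$.

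\emph{Step 1 (freeze the domain).} The integral $\int_{\Omega\ts{n+1}}(\phi\circ\inverseTransUn{y})\lambda\dx$ depends on $y\ts{n}$ only through the moving domain $\Omega\ts{n+1}=\transUn{y}(\Omega\ts{n})$ and through the push forward $\phi\circ\inverseTransUn{y}$, whereas $\Omega\ts{n}$ itself does not depend on $y\ts{n}$. Writing $\Phi:=\transUn{y}$, which by the Remark is, for small $\tau$, a diffeomorphism of $\Omega\ts{n}$ onto $\Omega\ts{n+1}$ with $\det(\nabla\Phi)>0$, the substitution rule gives
\begin{align*}
 \int_{\Omega\ts{n+1}}(\phi\circ\Phi^{-1})\,\lambda\dx = \int_{\Omega\ts{n}}\phi\,(\lambda\circ\Phi)\,\det(\nabla\Phi)\dx ,
\end{align*}
so that the whole $y\ts{n}$-dependence now sits explicitly in $\Phi$, over the fixed domain $\Omega\ts{n}$. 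Up to renaming (with the $y$ of the lemma playing the role of $\phi$ here, and $\lambda$ read as a fixed function on a neighbourhood of the domains), the right-hand side is precisely the functional whose $\Phi$-derivative is computed in Lemma~\ref{lem:dPhiIntegralyLambda}.

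\emph{Step 2 (chain rule).} By \eqref{equ:transUn} the map $y\ts{n}\mapsto\transUn{y}=\Id{}_{\Omega\ts{n}}+\tau F(y\ts{n})$ is affine, and since $F$ acts pointwise its Fréchet derivative is $D_{y\ts{n}}\transUn{y}[\psi]=\tau\,DF(y\ts{n})\,\psi$. Applying Lemma~\ref{lem:dPhiIntegralyLambda} with the inner variation $\psi_\Phi:=\tau\,DF(y\ts{n})\,\psi$ yields
\begin{align*}
 D_{y\ts{n}}\Big(\int_{\Omega\ts{n+1}}&(\phi\circ\inverseTransUn{y})\,\lambda\dx\Big)[\psi]
  = \int_{\partial\Omega\ts{n}}\tau\,\phi\,(\lambda\circ\Phi)\,\big(\det(\nabla\Phi)\,\nabla\Phi^{-T}\normvec\big)\cdot\big(DF(y\ts{n})\,\psi\big)\drx \\
  &\quad- \int_{\Omega\ts{n}}\tau\,(\lambda\circ\Phi)\,\big(\det(\nabla\Phi)\,\nabla\Phi^{-T}\nabla\phi\big)\cdot\big(DF(y\ts{n})\,\psi\big)\dx .
\end{align*}

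\emph{Step 3 (expand in $\tau$).} Since $\Phi=\Id{}_{\Omega\ts{n}}+\tau F(y\ts{n})$ gives $\nabla\Phi=\Imat+\mathcal O(\tau)$, hence $\det(\nabla\Phi)\,\nabla\Phi^{-T}=\Imat+\mathcal O(\tau)$, and each of the two integrands already carries an explicit factor $\tau$, replacing $\det(\nabla\Phi)\,\nabla\Phi^{-T}$ by $\Imat$ alters both terms only by $\mathcal O(\tau^2)$; the factor $\lambda\circ\transUn{y}$ is left unexpanded. This produces exactly the claimed identity. The only genuine care needed is the uniformity of the $\mathcal O(\tau^2)$ remainder: it relies on the Remark (so that $\transUn{y}$ is indeed a diffeomorphism with $\nabla\Phi=\Imat+\mathcal O(\tau)$) together with the assumed smoothness and boundedness of $\phi$, $\lambda$, $F$ and the relevant derivatives on $\overline{\Omega\ts{n}}$ and $\overline{\Omega\ts{n+1}}$, so that the hidden constants do not depend on $n$. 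Apart from that bookkeeping the argument is mechanical, the analytic substance having already been absorbed into Lemmas~\ref{lem:divDetGradPhi0} and~\ref{lem:dPhiIntegralyLambda}.
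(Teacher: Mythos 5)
Your proposal is correct and follows essentially the same route as the paper's proof: pull the integral back to $\Omega\ts{n}$ by change of variables, apply the chain rule with $\partial_{y\ts{n}}\transUn{y}[\psi]=\tau DF(y\ts{n})\psi$ and Lemma~\ref{lem:dPhiIntegralyLambda}, then expand $\det(\nabla\transUn{y})\nabla\transUn{y}^{-T}$ in $\tau$ and absorb the corrections into $\mathcal O(\tau^2)$ thanks to the explicit factor $\tau$ already present. The only difference is cosmetic: the paper writes out the first-order expansions of the determinant and inverse separately, whereas you collapse them into a single $\Imat+\mathcal O(\tau)$ step.
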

\begin{proof}
 Due to definition we define
 \begin{align*}
  \int_{\Omega\ts{n+1}} \big(\phi\circ\inverseTransUn{y}\big) \lambda\dx 
  = \int_{\Omega\ts{n}} \phi \big(\lambda\circ\transUn{y}\big) \det(\nabla\transUn{y}) \dx
  =:  \mathcal K(\transUn{y})
 \end{align*}
 The chain rule yields the variation with respect to $y\ts{n}$ as
 \begin{align*}
  D_{y\ts{n}} \mathcal K(\transUn{y}) [\psi_y]
   = D\mathcal K(\transUn{y}) (\partial_{y\ts{n}} \transUn{y})\,[\psi_y].
 \end{align*}
 Using the definition of $\transUn{y}$ yields
 \begin{align*}
  \partial_{y\ts{n}} \transUn{y}\,[\psi_y] = \tau DF(y\ts{n}) [\psi_y]
 \end{align*}
 and applying lemma \ref{lem:dPhiIntegralyLambda} gives
 \begin{align*}
  \partial_{y\ts{n}} \mathcal K [\psi_y]
  = &\int_{\partial\Omega\ts{n}} \phi (\lambda\circ\transUn{y} ) \det(\nabla\transUn{y})\nabla\transUn{y}^{-T}\normvec\cdot \tau DF(y\ts{n})\psi_y \dx\\
  &-\int_{\Omega\ts{n}} \det(\nabla\Phi)\nabla\Phi^{-T} \nabla \phi (\lambda\circ\transUn{y})\cdot \tau DF(y\ts{n}) \psi_y \drx
 \end{align*}
 Since the determinant and inverse of a matrix satisfies
 \begin{align*}
  \det( I + \eps A) = 1 + \eps \trace(A) + \mathcal O(\eps^2)
  \qquad\mbox{and}\qquad
  (I+\eps A)^{-1} = I - \eps A + \mathcal O(\eps^2),
 \end{align*}
 respectively, for small $\eps>0$, we obtain
 \begin{align*}
  \det( \nabla\transUn{y}) = \det( I + \tau F(y\ts{n}) ) = 1 + \tau \Div\big( F(y\ts{n}) \big) + \mathcal O(\tau^2)
 \end{align*}
 and
 \begin{align*}
  (\nabla\transUn{y})^{-1} = (I + \tau \nabla F(y\ts{n}))^{-1} =   I - \tau \nabla F(y\ts{n}) + \mathcal O(\tau^2)
 \end{align*}
 for small time steps $\tau>0$. Therefore, we get
 \begin{align*}
  &\partial_{y\ts{n}} \mathcal K [\psi_y]
  = \int_{\partial\Omega\ts{n}} \phi (\lambda\circ\transUn{y}) \big(1+\tau\Div( F(y\ts{n}))\big)\big(I-\tau \nabla F(y\ts{n})\big)^T\normvec \cdot\tau DF(y\ts{n}) \psi_y\dx \\
  &\quad-\int_{\Omega\ts{n}} \big(1+\tau\Div(F(y\ts{n}))\big)\big(I-\tau \nabla F(y\ts{n})\big)^T \nabla\phi (\lambda\circ\transUn{y})\cdot \tau DF(y\ts{n}) \psi_y \drx
  + \mathcal O(\tau^2)
 \end{align*}
 which finally yields
 \begin{align*}
  \partial_{y\ts{n}} \mathcal K [\psi_y]
  =& \int_{\partial\Omega\ts{n}} \tau\phi (\lambda\circ\transUn{y}) DF(y\ts{n}) \psi_y \cdot \normvec \dx \\
  &-\int_{\Omega\ts{n}} \tau(\lambda\circ\transUn{y}) \nabla\phi \cdot  DF(y\ts{n}) \psi_y \drx
  + \mathcal O(\tau^2)
 \end{align*} 

\end{proof}

\begin{Remark}
 The above theorem holds for arbitrary flux functions $F$, i.e.\ $F:V\ts{n}\to W$ for an appropriate Banach space $W$.
\end{Remark}

The following corollary shows the application of the general theorems stated above to problems given by 
\begin{Corollary}
 \label{cor:pushfw}
 Let $y\in V$, $\phi\in V\ts{n}$ and $\lambda\in V\ts{n}$. Moreover, let $F:\mathbb R\to\mathbb R^d$ and $\transUn{y}$ be given by \eqref{equ:transUn}.
 Then
 \begin{align*}
  \< \partial_\phi (\phi\circ\inverseTransUn{y}) [\psi_\phi], \lambda\>_{H\ts{n+1}}
   = &\< \lambda\circ\transUn{y} , \psi_\phi\>_{H\ts{n}} \\
    &+ \tau \< \lambda\circ\transUn{y} F'(y\ts{n})\cdot \nabla y\ts{n}, \psi_\phi\>_{H\ts{n}} + \mathcal O(\tau^2)
 \end{align*}
 holds for $\psi_\phi\in V\ts{n}$ and 
 \begin{align*}
  \< \partial_{y\ts{n}} (\phi\circ\inverseTransUn{y}) [\psi_y], \lambda\>_{H\ts{n+1}}
   = &\tau \<\phi(\lambda\circ\transUn{y}) F'(y\ts{n})\cdot\normvec, \psi_y \>_{H_\Gamma\ts{n}}\\
   &- \tau \<(\lambda\circ\transUn{y}) \nabla\phi\cdot F'(y\ts{n}), \psi_y \>_{H\ts{n}}  + \mathcal O(\tau^2)
 \end{align*}
 for $\psi_y\in V\ts{n}$.
\end{Corollary}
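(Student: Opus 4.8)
The plan is to derive Corollary~\ref{cor:pushfw} directly from Theorem~\ref{thm:pushfw} and Lemma~\ref{lem:dPhiIntegralyLambda} by specialising to the scalar flux $F:\mathbb R\to\mathbb R^d$, so that $DF(y\ts{n})[\psi] = F'(y\ts{n})\,\psi$ with $F'(y\ts{n})\in\mathbb R^d$ acting by scalar multiplication on the scalar increment $\psi$. Observe first that the pairing $\<\cdot,\cdot\>_{H\ts{n+1}}$ is simply the $L^2(\Omega\ts{n+1})$ inner product, so that $\<\phi\circ\inverseTransUn{y},\lambda\>_{H\ts{n+1}} = \int_{\Omega\ts{n+1}}(\phi\circ\inverseTransUn{y})\,\lambda\dx$, which is exactly the functional $\mathcal K(\transUn{y})$ appearing in the proof of Theorem~\ref{thm:pushfw} after the change of variables. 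Thus both identities claimed in the corollary are derivatives of this same functional, one with respect to the ``evaluated'' argument $\phi$ and one with respect to $y\ts{n}$ through the transformation $\transUn{y}$.

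For the second identity I would just quote Theorem~\ref{thm:pushfw} verbatim, replacing $DF(y\ts{n})\psi_y$ by $F'(y\ts{n})\psi_y$: the boundary term becomes $\tau\<\phi(\lambda\circ\transUn{y})F'(y\ts{n})\cdot\normvec,\psi_y\>_{H_\Gamma\ts{n}}$ and the interior term becomes $-\tau\<(\lambda\circ\transUn{y})\nabla\phi\cdot F'(y\ts{n}),\psi_y\>_{H\ts{n}}$, both modulo $\mathcal O(\tau^2)$, which is precisely the stated formula. No new work is needed here beyond noting that $F'(y\ts{n})$ is a fixed vector once $y\ts{n}$ is fixed, so it commutes past the scalar $\psi_y$ and the dot products are well-defined.

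For the first identity, the variable of differentiation is the function $\phi$ that is pushed forward, not $y\ts{n}$; however $\phi$ appears in $\mathcal K$ in two places after the change of variables, namely directly as the factor $\phi$ and indirectly through $\transUn{y}$ if one adopts the convention — consistent with the corollary's right-hand side — that $F$ is evaluated along $y\ts{n}$ but the perturbation of the pushed-forward quantity also perturbs the transformation. Concretely, writing $\mathcal K = \int_{\Omega\ts{n}}\phi\,(\lambda\circ\transUn{y})\det(\nabla\transUn{y})\dx$, the derivative in $\phi$ in direction $\psi_\phi$ has a first contribution $\int_{\Omega\ts{n}}\psi_\phi\,(\lambda\circ\transUn{y})\det(\nabla\transUn{y})\dx$, which to leading order in $\tau$ equals $\<\lambda\circ\transUn{y},\psi_\phi\>_{H\ts{n}}$ since $\det(\nabla\transUn{y}) = 1 + \mathcal O(\tau)$; and a second contribution coming from the $\transUn{y}$-dependence, which by the same computation as in Theorem~\ref{thm:pushfw} — using $\partial\transUn{y} = \tau DF$ and the expansions of the determinant and inverse — produces the $\tau\<\lambda\circ\transUn{y}\,F'(y\ts{n})\cdot\nabla y\ts{n},\psi_\phi\>_{H\ts{n}}$ term plus $\mathcal O(\tau^2)$; collecting these gives the claim.

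The main obstacle is bookkeeping rather than genuine difficulty: one must be careful about which occurrences of $\phi$ and $y\ts{n}$ count as independent variables in the Lagrangian setting, since in equation~\eqref{equ:dualFormState} the same symbol $y\ts{n}$ plays the role of both the pushed-forward state and the argument of $F$, and the two formulas in the corollary correspond to linearising in these two roles separately. Once that distinction is made explicit, each formula is an immediate specialisation of the $\tau$-expanded identities already established, so I expect the proof to be short and to consist mainly of substituting $DF(y\ts{n})[\psi]=F'(y\ts{n})\psi$ into the conclusions of Theorem~\ref{thm:pushfw} and Lemma~\ref{lem:dPhiIntegralyLambda} and reading off the leading-order terms.
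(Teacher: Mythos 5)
Your handling of the second identity is exactly what the paper does: quote Theorem~\ref{thm:pushfw} and substitute $DF(y\ts{n})\psi_y = F'(y\ts{n})\psi_y$ using $F:\mathbb R\to\mathbb R^d$. The problem is your first identity, where the mechanism you describe is wrong. In the corollary, $\partial_\phi$ is the partial derivative with the transformation held fixed: $\transUn{y}=\Id{}_{\Omega\ts{n}}+\tau F(y\ts{n})$ depends only on $y\ts{n}$, not on $\phi$, and the whole point of splitting the derivative into $\partial_\phi$ and $\partial_{y\ts{n}}$ is precisely to separate the two roles that $y\ts{n}$ plays in the term $y\ts{n}\circ\inverseTransUn{y}$ of the constraint. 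Your ``second contribution'' from a supposed $\phi$-dependence of $\transUn{y}$ therefore does not exist; and if you did differentiate through the transformation (via Lemma~\ref{lem:dPhiIntegralyLambda}), you would obtain the boundary term and the $\nabla\phi$ term of the \emph{second} identity, not the $F'(y\ts{n})\cdot\nabla y\ts{n}$ term you claim, so your mechanism would not even reproduce the stated formula.

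The actual source of the order-$\tau$ term is the Jacobian you threw away. Since the map $\phi\mapsto\phi\circ\inverseTransUn{y}$ is linear, $\partial_\phi(\phi\circ\inverseTransUn{y})[\psi_\phi]=\psi_\phi\circ\inverseTransUn{y}$, and changing variables gives
\begin{align*}
 \< \psi_\phi\circ\inverseTransUn{y},\lambda\>_{H\ts{n+1}}
 = \int_{\Omega\ts{n}} \psi_\phi\,(\lambda\circ\transUn{y})\det(\nabla\transUn{y})\dx
 = \int_{\Omega\ts{n}} \psi_\phi\,(\lambda\circ\transUn{y})\big(1+\tau\Div(F(y\ts{n}))\big)\dx + \mathcal O(\tau^2),
\end{align*}
and the chain rule $\Div(F(y\ts{n}))=F'(y\ts{n})\cdot\nabla y\ts{n}$ yields exactly the claimed $\tau$-term. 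In your argument you state that the determinant factor equals $1$ ``to leading order'' and keep only $\<\lambda\circ\transUn{y},\psi_\phi\>_{H\ts{n}}$, which discards a genuine $\mathcal O(\tau)$ contribution — the very term the corollary asserts — and then recovers a term of the right appearance from an incorrect source. So the final formula you write down is right, but the derivation has a genuine gap: fix it by keeping the $\tau\Div(F(y\ts{n}))$ part of $\det(\nabla\transUn{y})$ and deleting the spurious variation of $\transUn{y}$ with respect to $\phi$.
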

\begin{proof}
 The first part is given by
 \begin{align*}
  \< \partial_\phi &(\phi\circ\inverseTransUn{y}) [\psi_\phi], \lambda\>_{H\ts{n+1}}
   = \int_{\Omega\ts{n+1}} (\psi_\phi\circ\inverseTransUn{y}) \lambda \dx \\
  &= \int_{\Omega\ts{n}} \psi_\phi (\lambda\circ\transUn{y}) \det(\nabla\transUn{y}) \dx
   = \int_{\Omega\ts{n}} \psi_\phi (\lambda\circ\transUn{y}) \big( 1 + \tau \Div(F(y\ts{n})) \big) \dx + \mathcal O(\tau^2)
 \end{align*}
 which yields the assumption with $\Div(F(y\ts{n})) = F'(y\ts{n})\cdot\nabla y\ts{n}$.
 
 The second part is a direct consequence of theorem \ref{thm:pushfw} by using the fact that $F:\mathbb R\to \mathbb R^d$.
 
\end{proof}

\subsection{Adjoint System}
We apply the above theorems to the optimal control problem \minCont. The derivative with respect to the control function $u$ is given by
\begin{align*}
 \<\partial_{u} &L, \psi_u\>_{U^*,U} = \<\partial_{u} J(y,u), \psi_u\>_{U^*,U}
  - \sum_{n=1}^{N_t} \tau  \<(\mathcal B\ts{n})^* \lambda\ts{n}, \psi_u\>_{U^*,U}
\end{align*}
for all $\psi_u\in U$. The first variation of the Lagrange functional with respect to $y\ts{n}$ is
\begin{align*}
 \<&\partial_{y\ts{n}} L, \psi_y\>_{(V\ts{n})^*,V\ts{n}} = 
 \< \partial_{y\ts{n}} J(y,u), \psi_y \>_{(V\ts{n})^*,V\ts{n}}
  + \<\lambda\ts{n},\psi_y\>_{H\ts{n}}  
  + \tau\< DA(y\ts{n})\psi_y, \lambda\ts{n}\>_{(V\ts{n})^*,V\ts{n}} \\
  &\quad- \Big[ 
     \<\lambda\ts{n+1}\circ\transUn{y}, \psi_y \>_{H\ts{n}} 
     + \tau \< F'(y\ts{n})\cdot \nabla y\ts{n} \psi_y + F(y\ts{n})\cdot\nabla \psi_y, \lambda\ts{n+1}\circ\transUn{y}\>_{H\ts{n}}  \\
     &\quad+ \tau \< F'(y\ts{n}\cdot\nabla y\ts{n} (\lambda\ts{n+1}\circ\transUn{y}), \psi_y\>_{H\ts{n}} 
  \Big]
  - \Big\{
     \tau \<y\ts{n}(\lambda\ts{n+1}\circ\transUn{y}) F'(y\ts{n})\cdot\normvec, \psi_y\>_{H_\Gamma\ts{n}} \\
     &\quad- \tau\<F'(y\ts{n})\cdot\nabla y\ts{n} (\lambda\ts{n+1}\circ\transUn{y}), \psi_y \>_{H\ts{n}} 
  \Big\}
  + \mathcal O(\tau^2)
\end{align*}
for all $\psi_y\in V\ts{n}$ in the corresponding time step $n=1\ldots N_t-1$ by using corollary \ref{cor:pushfw}.
Only the inner products in the $\{\}$ brackets are a result of the implicit variation with respect to the domain.
Terms, which are handled explicitly in the time-discretisation, e.g.\ $\tau F(y)\cdot \nabla y$, have variations of order $\tau^2$ which are directly included in $\mathcal O(\tau^2)$.
All remaining terms are due to the variation of the partial differential equation as usual.
We simplify the above result for $\partial_{y\ts{n}} L$ as
\begin{align*}
 \<&\partial_{y\ts{n}} L, \psi_y\>_{(V\ts{n})^*,V\ts{n}} = 
 \< \partial_{y\ts{n}} J(y,u), \psi_y \>_{(V\ts{n})^*,V\ts{n}}
  + \<\lambda\ts{n},\psi_y\>_{H\ts{n}}  \\
  &\quad+ \tau\< DA(y\ts{n})\psi_y, \lambda\ts{n}\>_{(V\ts{n})^*,V\ts{n}}
  - \<\lambda\ts{n+1}\circ\transUn{y}, \psi_y\>_{H\ts{n}} \\
  &\quad- \tau \< \Div( \psi_y F(y\ts{n})), \lambda\ts{n+1}\circ\transUn{y}\>_{H\ts{n}}   
  - \tau \<y\ts{n}(\lambda\ts{n+1}\circ\transUn{y}) F'(y\ts{n})\cdot\normvec, \psi_y\>_{H_\Gamma\ts{n}} 
  + \mathcal O(\tau^2)
\end{align*}
Furthermore, we obtain for the final time step $N_t$
\begin{align*}
 \< \partial_{y\ts{N_t}} &L, \psi_y\>_{(V\ts{N_t})^*,V\ts{N_t}} = \<\partial_{y\ts{N_t}} J(y,u), \psi_y\>_{(V\ts{N_t})^*,V\ts{N_t}} 
 + \< \lambda\ts{N_t}, \psi_y \>_{H\ts{N_t}} \\
 &+ \tau \< DA(y\ts{N_t})\psi_y, \lambda\ts{N_t}\>_{(V\ts{N_t})^*,V\ts{N_t}}  
\end{align*}
All terms of order $\tau$ are a consequence of the time-implicit scheme we chose for the discretisation and they disappear in explicit schemes. Since we only consider small time steps $\tau\ll 1$, we neglect them in the following for the last time step.
Using the fact that
\begin{align*}
 \< \Div(\psi_y F(y\ts{n})), \lambda\>_{H\ts{n}}
 = \< \lambda F(y\ts{n})\cdot\normvec, \psi_y\>_{H_\Gamma\ts{n}}
 - \< \nabla\lambda \cdot F(y\ts{n}), \psi_y\>_{H\ts{n}}
\end{align*}
hold, we can, roughly, identify the adjoint equation as
\begin{align*}
 \frac1\tau( \lambda\ts{n}-\lambda\ts{n+1}\circ\transUn{y}) + DA(y\ts{n})^* \lambda\ts{n} 
 +\frac1\tau \partial_{y\ts{n}} J(y,u) = \nabla&(\lambda\ts{n+1}\circ\transUn{y})\cdot F(y\ts{n}) \\
 &+ \mathcal C\ts{n}(\lambda\ts{n+1}\circ\transUn{y})
\end{align*}
in $(V\ts{n})^*$ and hence an equation in $\Omega\ts{n}$. Here $\mathcal C\ts{n}:V\ts{n}\to (V\ts{n})^*$ is given by
\begin{align*}
 \<\mathcal C\ts{n} (\lambda\ts{n+1}\circ\transUn{y}), \psi\>_{(V\ts{n})^*,V\ts{n}} := \<(\lambda\ts{n+1}\circ\transUn{y}) \big(y\ts{n} F'(y\ts{n}) + F(y\ts{n})\big)\cdot\normvec , \psi\>_{H\ts{n}}
\end{align*}
and represents the boundary values. It is, among others, a consequence of the implicit domain variation.
The gradient of the reduced cost functional $\hat J(u) := J(y(u),u)$ is given by
\begin{align*}
 \nabla \hat J(u) := \rieszIso^{-1}_U\left(
  \frac1\tau \partial_{u} J(y,u) - \sum_{n=1}^{N_t} (\mathcal B\ts{n})^* \lambda\ts{n}
 \right)
\end{align*}
where $\rieszIso_U$ denotes the Riesz isomorphism $\rieszIso_U:U\to U^*$.

\subsection{Conclusion}
The optimisation approach described in this section is very close to the numerical implementation of free boundary problems.
The basic principle is illustrated in figure~\ref{fig:transformationOptimisation}. 
Here, the state (or forward) equation yields the domains $\Omega\ts{n}$ recursively by the transformation $\transUn{y}$. Consequently, the adjoint (or backward) equation uses the same domains, starting from $\Omega\ts{N_t}$ to $\Omega\ts{0}$.
Particularly, the push forward term of the discrete time derivative in the forward problem changes to a pullback term in the adjoint equation.

\begin{figure}[bt]
 \centering
 \psfrag{A}{$\Omega\ts{0}$}
 \psfrag{B}{$\Omega\ts{1}$}
 \psfrag{C}{$\Omega\ts{2}$}
 \psfrag{P1}{$\Phi\ts{0}$}
 \psfrag{P2}{$\Phi\ts{1}$}
 \psfrag{P3}{$\Phi\ts{2}$}
 \includegraphics[width=.82\textwidth]{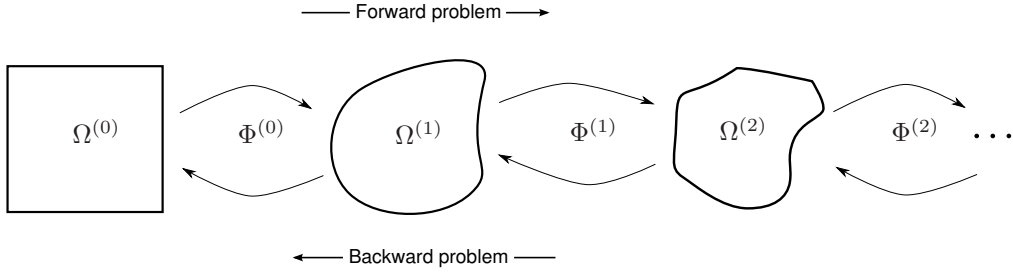}
 \caption{Basic principle of the time-discrete optimisation of free boundary problems. The state equation yields the sequence of domains $\Omega\ts{n}$ as a consequence of $\transUn{y}$. The adjoint system goes backwards, starting from $\Omega\ts{N_t}$ to $\Omega\ts{0}$.}
 \label{fig:transformationOptimisation}
\end{figure}

Instead of deriving the variation of each domain $\Omega\ts{n}$ directly, it is handled by the variation of the push forward terms, e.g.\ in the time-derivative, with respect to $\transUn{y}$ and hence to $y$.
This procedure allows a easy derivation of the adjoint equation as only a few simple (explicit) terms are needed to obtain information about the shape dependency.
Moreover, we showed above that terms, handled by an explicit time-discretisation, do not yield a contribution to the domain variation as the derivatives are mainly of order $\tau^2$. The next section shows some numerical examples applying the above method.



\section{Numerical Examples}
\label{sec:numRes}
In this section we verify the optimisation approach shown in the previous sections numerically.
The first test case involve the Navier-Stokes equation with a free surface, c.f.\ \cite{Beale3}, where the transformation is given in a natural way, that is, the flow velocity is used. The second example is a Stefan-type problem where the final shape of a melting or solidification process is optimised.
Here, the transformation is not given by a flow velocity but an artificial motion, similar to an ALE \cite{ALE2} method.

\subsection{Navier-Stokes equation}
In this example, we optimise the filling of an open liquid tank with small obstacles shown in figure~\ref{num:fig:domLiqTank}. 
In particular, we search an inflow profile such that for a given outflow profile the free surface remains as still as possible.
This yields the optimisation problem: Minimise
\begin{align*}
 J(\vecb u, \Omega) := \frac12 \int_0^T \int_{\Gamma_f^t} |\vecb u(x,t)|^2 \drx \,dt
\end{align*}
subject to the Navier-Stokes equation
\begin{subequations}
\label{equ:ns_cont}
\begin{align}
 \partial_t \vecb u + \vecb u \cdot \nabla\vecb u - \nu\Delta\vecb u &= -\nabla p
 &&\mbox{in }\Omega_t\times(0,T) \\
 \Div \vecb u &= 0
 &&\mbox{in }\Omega_t\times(0,T) \\ 
 \vecb u &= 0
 &&\mbox{on }\Gamma_w \times(0,T) \\
 \vecb u &= \vecb u_o
 &&\mbox{on }\Gamma_o \times(0,T) \\
 \nabla p\cdot\normvec &= c
 &&\mbox{on }\Gamma_i \times(0,T) \label{num:equ:nsInflowVelo}\\
 \nabla \vecb u\cdot \normvec &= 0
 &&\mbox{on }\Gamma_f^t \times(0,T) \label{num:equ:nsFreeSurfCoupU}\\ 
 p &= 0
 &&\mbox{on }\Gamma_f^t \times(0,T) \label{num:equ:nsFreeSurfCoupP}\\ 
 \vecb u(0) &= 0
 &&\mbox{in }\Omega_0.
\end{align}
\end{subequations}

The domain $\Omega_t$ depends on the velocity profile $\vecb u$ at the free surface $\Gamma_f^t$. In particular, the motion of $\Gamma_f^t$ is given by $\vecb u|_{\Gamma_f^t}$.
For convenience we use \eqref{num:equ:nsInflowVelo} as inflow condition. Here, $c$ denotes the control variable depending on space and time, i.e.\ $c:\Gamma_i \times[0,T] \to \mathbb R$. From a physical point of view, $\nabla p\cdot \normvec$ is proportional to the inflow flux in normal direction.
The condition for the free surface, \eqref{num:equ:nsFreeSurfCoupU} and \eqref{num:equ:nsFreeSurfCoupP}, is described in more detail in e.g.\ \cite{Ferziger08}.

\begin{figure}[tb]
 \centering
 \begin{minipage}{.49\textwidth}
  \centering
  \psfrag{A}[c]{$\Gamma_i$}
  \psfrag{B}{$\Gamma_w$}
  \psfrag{C}[c]{$\Gamma_w$}
  \psfrag{D}[c]{$\Gamma_w$}
  \psfrag{E}{$\Gamma_f$}
  \psfrag{F}{$\Gamma_o$}
  \includegraphics[scale=0.5]{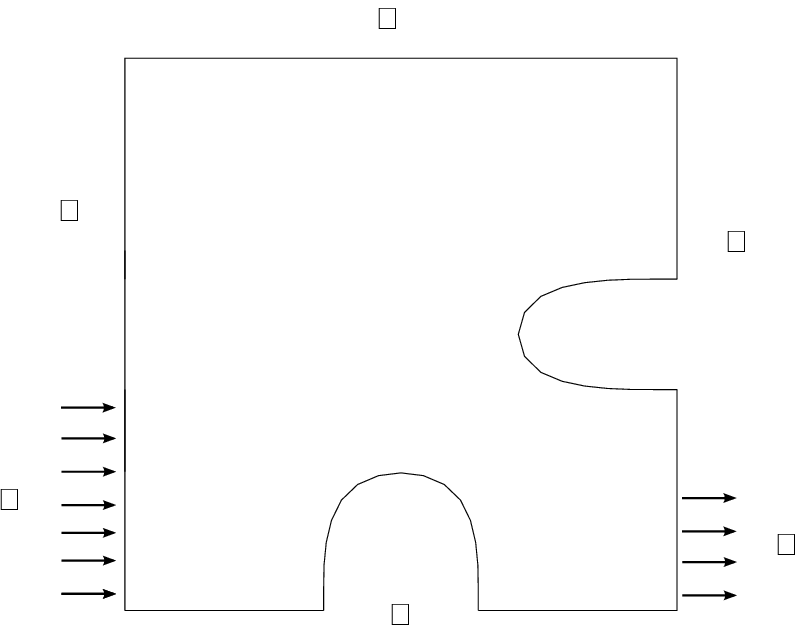}
  \caption{Domain for the Navier-Stokes example.}
  \label{num:fig:domLiqTank}
 \end{minipage}
 \begin{minipage}{.49\textwidth}
  \centering
  \includegraphics[scale=1.0]{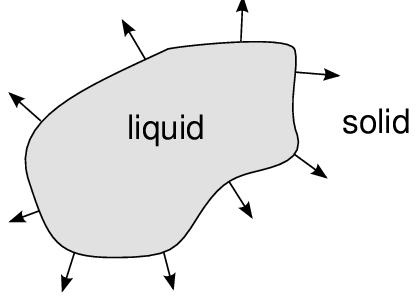}
  \caption{Illustration of a Stefan-problem domain.}
  \label{fig:stefanDomain}
 \end{minipage}
\end{figure}

\subsubsection{Time Discretisation}
For the time-discretisation a Chorin projection \cite{Chorin, kuhnertChorin} is used.
In particular, we choose the domain transformation $\Phi\ts{n}:\Omega\ts{n}\to\Omega\ts{n+1}$ as
\begin{align*}
 \transUn{\vecb u} &:= \Id{}_{\Omega\ts{n}} + \tau \vecb u\ts{n}.
\end{align*}
Then, the time discretisation of \eqref{equ:ns_cont} reads
\begin{align*}
 \frac{1}{\tau}\big(\vecb u\ts{n+1} - (\vecb u\ts{n}\circ\inverseTransUn{\vecb u})\big) - \nu\Delta\vecb u\ts{n+1} &= -\nabla p\ts{n+1} 
 &&\mbox{in }\Omega\ts{n+1}\\
 \Delta p\ts{n+1} &= \frac{1}{\sigma}\Div \vecb u\ts{n+1}  
 &&\mbox{in }\Omega\ts{n+1} \\
 \vecb u\ts{n+1} &= 0 
 &&\mbox{on }\Gamma_w \\
 \vecb u\ts{n+1} &= \vecb u_o
 &&\mbox{on }\Gamma_o \\ 
 p\ts{n+1} &= 0
 &&\mbox{on }\Gamma_f\ts{n+1} \\ 
 \nabla p\ts{n+1}\cdot \normvec &= c\ts{n+1}
 &&\mbox{on }\Gamma_i\\
 \vecb u\ts{0} &= 0
 &&\mbox{in }\Omega\ts{0}.
\end{align*}
where $\sigma>0$ denotes a regularisation parameter.
All boundaries not mentioned above are set to uniform Neumann condition, i.e.\ $\nabla p\cdot\normvec = 0$ or 
$\nabla\vecb u\cdot\normvec = 0$, which is a consequence of Chorin's projection.
The cost function is discretised straight forward
\begin{align*}
 J(\vecb u, c) := \frac12\sum_{n=1}^{N_t-1} \tau \int_{\Gamma_f\ts{n}} |\vecb u\ts{n}|^2 \drx .
\end{align*}

\subsubsection{Adjoint system}
Using the previous optimisation approach, we obtain the identification of the adjoint system as

\begin{align*}
 \begin{aligned}
 \frac1\tau  \big( \lambda_{\vecb u}\ts{n} - (\lambda_{\vecb u}\ts{n+1}\circ\transUn{\vecb u}) \big)
 - \nu \Delta\lambda_{\vecb u}\ts{n} &= \nabla \lambda_p\ts{n} 
 - \big(\lambda_{\vecb u}\ts{n+1}\circ\transUn{\vecb u}\big)\cdot \nabla \vecb u\ts{n}
 &&\mbox{in }\Omega\ts{n} \\
 -\sigma \Delta \lambda_p\ts{n} &= \Div \lambda_{\vecb u}\ts{n}
 &&\mbox{in }\Omega\ts{n} \\
 \nu\nabla\lambda_{\vecb u}\ts{n} \,\normvec &= - \vecb u\ts{n}
  - \big(\vecb u\ts{n}\cdot(\lambda_{\vecb u}\ts{n+1}\circ\transUn{\vecb u})\big)\normvec
 &&\mbox{on }\Gamma_f\ts{n} \\
 \lambda_p\ts{n} &= 0
 &&\mbox{on }\Gamma_f\ts{n}\\
 \nu\nabla\lambda_{\vecb u}\ts{n} \,\normvec &= -\lambda_p\ts{n} \normvec
 - \big(\vecb u\ts{n}\cdot(\lambda_{\vecb u}\ts{n+1}\circ\transUn{\vecb u})\big)\normvec 
 &&\mbox{on }\Gamma_i \\
 \sigma\nabla \lambda_p \cdot\normvec &= -\lambda_{\vecb u}\ts{n}\cdot\normvec
 &&\mbox{on }\Gamma_i \\
 \lambda_{\vecb u}\ts{n} &= 0
 &&\mbox{on }\Gamma_{w,o} \\
 \sigma\nabla \lambda_p \cdot\normvec &= 0
 &&\mbox{on }\Gamma_{w,o} \\
 \lambda_{\vecb u}\ts{N_t} &= 0
 &&\mbox{in }\Omega\ts{N_t}.
 \end{aligned}
\end{align*}
The gradient of the reduced cost functional $\hat J(c) := J(\vecb u(c),c)$ is identified by
\begin{align*}
 \nabla \hat J(c) := - \sigma \lambda_p|_{\Gamma_i}.
\end{align*}
A detailed derivation of the adjoint system can be found in \cite{MyPhd}.
Note that only the pullback terms on the right hand side, e.g.\ $(\lambda_{\vecb u}\ts{n+1}\circ\transUn{\vecb u})\cdot \nabla \vecb u\ts{n}$, are a result of the domain variation.

\subsubsection{Numerical Results}
For the numerical implementation we use a meshless particle method, cf.\ \cite{MyPhd}. These methods are superior to mesh-based methods like finite elements as no connectivity information is used and hence no expensive remeshing is needed if the positions of the supporting points change. Particularly, we use the finite pointset method, cf.\ \cite{kuhnertGridfree}. The optimisation process is performed by a second order BFGS method with Armijo rule, cf.\ \cite{Deuflhard}.

The final time is set to $T=2.5$, the step size $\tau=0.005$. The domain has a width and height of $5.0$. Moreover, the viscosity is set to $\nu=10$ and $\sigma=0.005$. The outflow velocity is given by a parabolic profile with $u_{max}=3.0$ in order to avoid singularities at the corners.

Figure~\ref{num:fig:fillingUncont} and \ref{num:fig:fillingOptim} show the results for the uncontrolled and optimised case, respectively. Here the colour represents the velocity magnitude. The uncontrolled case, that is, no inflow is given, yields a high velocity at the free boundary and therefore a large deformation of it, as expected. The resulting behaviour of the free surface is an effect of the high viscosity of the fluid. 
The controlled case forms out a straight flow, as good as possible for the given geometry, from the inflow to the outflow, which does not affect the free surface strongly. Hence, the surface velocity is much smaller than for the uncontrolled case, cf.\ figure~\ref{fig:ns_velosurf}. Note that due to the setting, the desired value of the velocity $\vecb u = 0$ at the free surface is not reachable and hence $\min J=0$ cannot be expected.
The corresponding evaluation of the cost functional and gradient norm is shown in figure~\ref{fig:ns_convergence}. The gradient norm shows a strong decrease in the first iterations, then it becomes flatter with order $1.04$ to $1.25$. Note that this is not surprisingly as the problem is highly non convex. On the one hand, the Navier-Stokes equation yields a complicated constraint, on the other hand, the adaptation can cause problems for the optimisation process.

\subsection{Stefan-type Problem}
The second example deals with a Stefan-type problem \cite{Visintin2008377}. These problems describe phase transiations, e.g\ from liquid to solid, see figure~\ref{fig:stefanDomain}. Particularly, we solve the heat equation
\begin{align}
 \label{equ:stefanProblAllg}
 \begin{aligned}
  \partial_t \theta - \Delta\theta &= f
  &&\mbox{in }\Omega_t \times(0,T) \\
  \theta &= \theta_m
  &&\mbox{on }\Gamma_t\times(0,T) \\
  \theta(0) &= \theta_0
  &&\mbox{in }\Omega_0\\
  V_n &= \beta \nabla\theta\cdot\normvec 
 \end{aligned}
\end{align}
with $\theta_m\in\mathbb R$ as the boundary temperature (melting point) and $\theta_0$ as the initial temperature distribution. $V_n$ denotes the velocity of the boundary in normal direction and hence also defines $\Omega_t$. Moreover, $\beta\in\mathbb R$ denotes a material constant.
Note that the sign of $\beta$ indicates whether $\Omega_t$ is a liquid or solid phase.

For the minimisation we start with an arbitrary domain $\Omega_0$ with boundary $\Gamma_0$ and want to achieve a desired shape with boundary $\Gamma_d$ at final time $T$.
In the following, we choose an ellipse for $\Gamma_d$. This desired boundary shape is parametrised in order to formulate an optimisation problem.
The easiest way of a parametrisation is the implicit definition of an ellipse given by
\begin{align}
 \label{equ:implicitEllipse}
 \frac{x^2}{a^2} + \frac{y^2}{b^2} = 1
\end{align}
in two dimensions. Here, $a$ and $b$ denote the stretching in the $x$- and $y$-direction, respectively.
Hence, the domain $\Omega_T$ has the boundary $\Gamma_d$ if \eqref{equ:implicitEllipse} is satisfied for all points on the boundary $\Gamma_T$.
The minimisation problem can therefore be interpreted as: Find a heat source $f$ such that
\begin{align}
 \label{equ:ellipseCostIntegral}
 \frac12 \int_{\Gamma_T} \left( \frac{x^2}{a^2}+\frac{y^2}{b^2}-1 \right)^2\drx
\end{align}
is minimal subject to \eqref{equ:stefanProblAllg}. The integral is minimal, in particular zero, if all boundary points satisfy \eqref{equ:implicitEllipse}. For all other settings, i.e. a measurable amount of points do not lay on the desired boundary, the integral is greater than zero.

\subsubsection{Time-discretisation}
For convenience, we replace the Dirichlet boundary condition of equation \eqref{equ:stefanProblAllg} by a Robin condition, i.e.
\[ \theta = \theta_m \qquad\rightarrow\qquad \theta + \kappa\nabla\theta\cdot\normvec = \theta_m \]
on $\Gamma_t$ where $\kappa>0$ denotes a small regularisation parameter depending on the discretisation.
Since we are dealing with a problem without convection and we only know the motion of the boundary, the choice of the transformation is not obvious.
In particular, we choose a smooth continuation of the boundary velocity by solving a Laplace equation similar to the ALE method, cf.\ \cite{ALE2}.
\begin{align*}
 \begin{aligned}
  -\Delta\vecb u &= 0
  &&\mbox{in }\Omega_t\\
  \vecb u + \kappa\nabla\vecb u\cdot\normvec &= \beta(\nabla\theta\cdot\normvec)\normvec
  \qquad&&\mbox{on }\Gamma_f
 \end{aligned}
\end{align*}
where we use a regularisation of the Dirichlet boundary condition as before.

We obtain the time discrete system by first solving the velocity equation
\begin{align*}
 \begin{aligned}
  -\Delta\vecb u\ts{n} &= 0
  &&\mbox{in }\Omega\ts{n}\\
  \vecb u\ts{n} + \kappa\nabla\vecb u\ts{n}\cdot\normvec &= \frac{\beta}{\kappa}(\theta_m-\theta\ts{n})\normvec
  \qquad&&\mbox{on }\Gamma\ts{n}
 \end{aligned}
\end{align*}
then the transformation
\begin{align*}
 \Phi\ts{n} &= \Id{}_{\Omega\ts{n}} + \tau \vecb u\ts{n}
 &&\mbox{in }\Omega\ts{n}
\end{align*}
and finally the heat equation
\begin{align*}
 \frac1\tau (\theta\ts{n+1}-(\theta\ts{n}\circ\inverseTrans{n}))  - \Delta\theta\ts{n+1} 
  &= (\vecb  u\ts{n}\cdot\nabla\theta\ts{n})\circ\inverseTrans{n} + \chi\, c\ts{n+1}
 &&\mbox{in }\Omega\ts{n+1} \\
 \theta\ts{n+1} + \kappa\nabla\theta\ts{n+1}\cdot\normvec &= \theta_m
  &&\mbox{on }\Gamma_w\ts{n+1} \\
  \intertext{with the initial condition}
  \theta\ts{0} &= \theta_0
  &&\mbox{in }\Omega\ts{0} .
\end{align*}
where $\chi$ denotes a spatial localisation function depending on $\vecb x$ only.
Note that we use the Robin condition of the heat equation to replace $\nabla\theta\cdot\normvec$ in the velocity equation and add a convection term to the right hand side of the heat equation due to the artificial transformation of the domain.

The discrete cost functional reads
\begin{align*}
 J(\vecb u,c) := \frac12 \int_{\Gamma\ts{N_t}} \Big( \big(\transUN{N_t}{\vecb u}\big)^T \,\mathbf E\, \big(\transUN{N_t}{\vecb u}\big) - 1 \Big)^{2\alpha} \drx
\end{align*}
with
\begin{align*}
 \mathbf E = \begin{pmatrix}
              a^{-2} & 0 \\
              0 & b^{-2}
             \end{pmatrix}
\end{align*}
which corresponds to \eqref{equ:ellipseCostIntegral}. Note that we use the power of $2\alpha$ in the cost functional to be able to consider also $L^p$ norms.

\subsubsection{Adjoint system}
Using the previous optimisation approach, we identify the adjoint equation and gradient as
\begin{align*}
 -\Delta \lambda_{\vecb u}\ts{n} &= \big(\lambda_\theta\ts{n+1}\circ\transUn{\vecb u}\big) \nabla\theta\ts{n}  
 &&\mbox{in }\Omega\ts{n}\\
 \lambda_{\vecb u}\ts{n} + \kappa \nabla\lambda_{\vecb u}\ts{n} \cdot\normvec &= 0
 &&\mbox{on }\Gamma\ts{n}\\[0.5ex]
 \frac1\tau\big( \lambda_\theta\ts{n} - \lambda_\theta\ts{n+1}\circ\transUn{\vecb u}\big) - \Delta \lambda_\theta\ts{n}
  &= -\vecb u\ts{n}\cdot\nabla(\lambda_\theta\ts{n+1}\circ\transUn{\vecb u})
  &&\mbox{in }\Omega\ts{n} \\
 \lambda_\theta\ts{n} + \kappa\nabla\lambda_\theta\ts{n}\cdot\normvec
  &= \kappa(\lambda_\theta\ts{n+1}\circ\transUn{\vecb u})\vecb u\ts{n}\cdot\normvec
  - \frac{\beta}{\kappa} \lambda_{\vecb u}\ts{n} \cdot \normvec
  &&\mbox{on }\Gamma\ts{n}
\end{align*}
for $n=1,\ldots,N_t-1$. For $n=N_t$ we get
\begin{align*}
 \begin{aligned}
 -\Delta \lambda_{\vecb u}\ts{N_t} &= 0
 &&\mbox{in }\Omega\ts{N_t} \\
 \lambda_{\vecb u}\ts{N_t} + \kappa \nabla \lambda_{\vecb u}\ts{N_t}\cdot\normvec &= 
 -2\alpha\kappa\Big( \big(\transUN{N_t}{\vecb u}\big)^T \,\mathbf E\, \big(\transUN{N_t}{\vecb u}\big) - 1 \Big)^{2\alpha-1}
 \,\mathbf E\, (\transUN{N_t}{\vecb u})
 \quad&&\mbox{on }\Gamma\ts{N_t}
 \end{aligned}
\end{align*}
and
\begin{align}
 \label{num:equ:finalTimeAdjTheta}
 \begin{aligned}
 \lambda_\theta\ts{N_t} - \tau \Delta \lambda_\theta\ts{N_t} &= 0 
 &&\mbox{in }\Omega\ts{N_t}\\
 \lambda_\theta\ts{N_t} + \kappa\nabla\lambda_\theta\ts{N_t}\cdot\normvec &= -\frac{\beta}{\kappa} \lambda_{\vecb u}\ts{N_t}\cdot\normvec
 \qquad&&\mbox{on }\Gamma\ts{N_t}.
 \end{aligned}
\end{align}
The gradient of the reduced cost functional reads
\begin{align*}
 \nabla \hat J(c)\ts{n} := - \int_{\Omega\ts{n}} \chi(\vecb x)\, \lambda_\theta\ts{n}(\vecb x) \dx .
\end{align*}
Again, a more detailed derivation of the adjoint equations can be found in \cite{MyPhd}.

\subsubsection{Numerical Results}
The spatial discretisation is, similar to the previous test case, performed by a meshless method with adaptation. Again, the BFGS method with Armijo rule is used for the optimisation. 
The setting is $\alpha=2$, $\beta=-1$, $\kappa=0.01$ and $\theta_m=0$. Moreover, we set the time step size $\tau=0.01$ and the finial time to $T=0.3$.
The localisation function $\chi$ is given by
\begin{align*}
 \chi(\vecb x)\, c\ts{n} := \sum_{i=1}^{N_x} \sum_{j=1}^{N_y} c_{ij}\ts{n} \exp\Big( -25\big( (x-x_{ij})^2 + (y-y_{ij})^2 \big) \Big)
\end{align*}
for the supporting points $-1, -0.6, -0.2 , 0.2, 0.6, 1.0$, i.e.\ $x_{11}=-1, y_{11}=-1$ till $x_{66}=1,y_{66}=1$.
Furthermore, the initial domain is given by a square with edge length $0.6$ and the desired shape is given by a circle of radius $1$ which yields $a=1.0$ and $b=1.0$.
Note that this example does not base on a physical setting as we only want to show the feasibility of our method.

The results are shown in figure~\ref{fig:stefanCircle}, where the colour represents the absolute value	of the source induced by the control function, i.e.\ $\chi\,c$. Moreover, the black line denotes the desired shape at final time. The optimised case shows a small deformation in the first time steps, which becomes larger close to the final time. The shape at final time is very close to the desired one.
The convergence, shown in figure~\ref{fig:stefanCircleConv}, is approximately of order $1.5$ and no Armijo step size reduction is needed.
Furthermore, the plateaus in the the cost functional and gradient norm are due to a reset of the BFGS matrix, implemented for stability reasons.
Note that we neglect these plateaus for the consideration of the convergence order.

\section{Conclusion}
In this paper we presented a simple approach for the optimisation of free boundary problems which is very close the their numerical implementation. In particular, we applied an extended time-discretisation based on an operator splitting. For this a decoupling of the deformation of the domain and the solution of the remaining partial differential equation was performed. 
With this approach, adjoint-based optimisation can be applied easily.
Since we do not perform an explicit derivative with respect to the domain, this information is partly obtained by the variation of the push forward terms needed for the discrete time derivative.
The numerical results based on the Navier-Stokes equation and a Stefan-type problem showed that this optimisation approach yields good results and is very easy to implement numerically.


\begin{figure}[p]
 \centering
 \includegraphics[width=.3\textwidth]{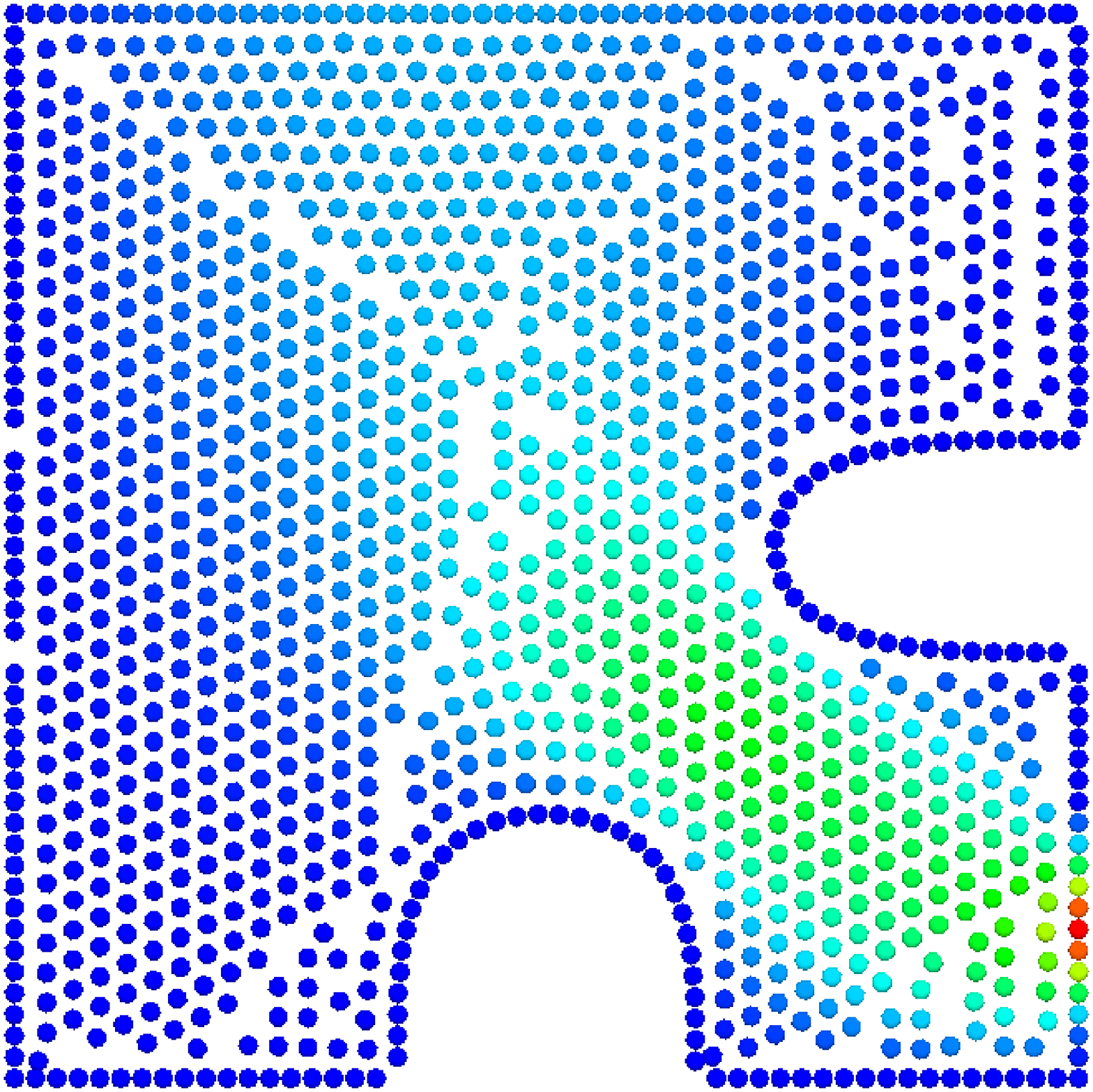}
 \includegraphics[width=.3\textwidth]{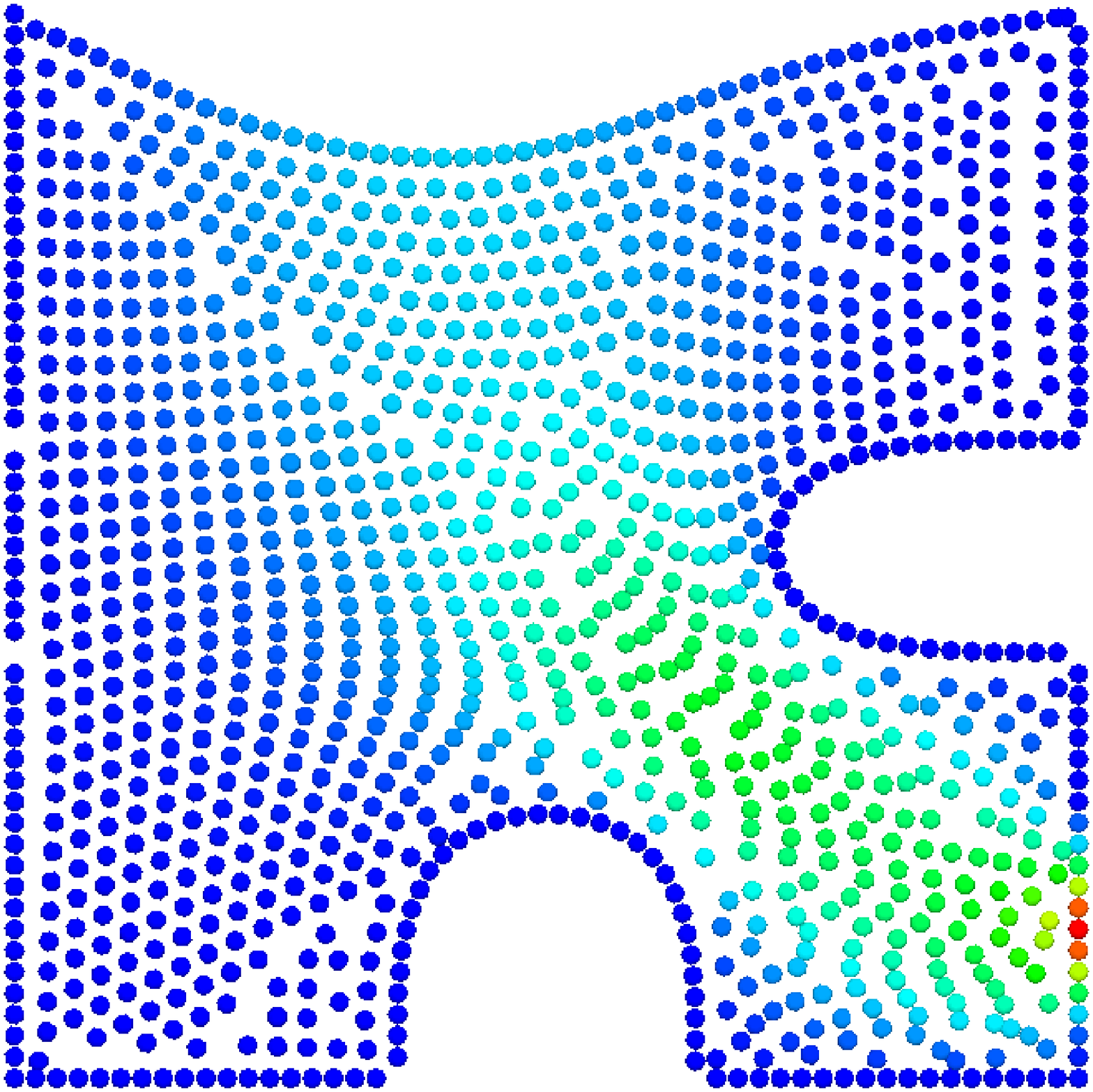}
 \includegraphics[width=.3\textwidth]{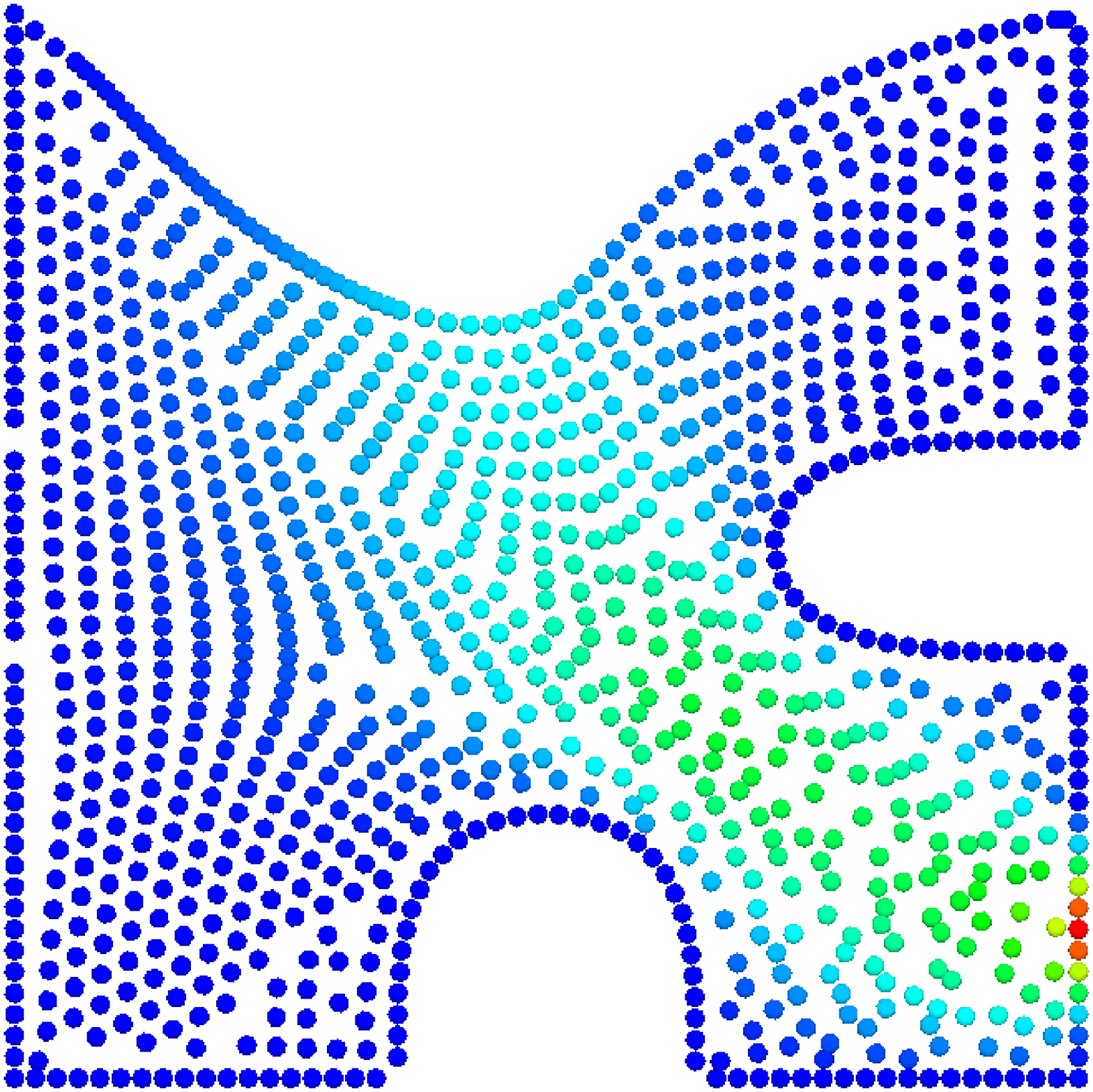}
 \caption{Filling of a liquid tank at time $0, 1.25, 2.5$ for the uncontrolled case.}
 \label{num:fig:fillingUncont}
\end{figure}

\begin{figure}[p]
 \centering
 \includegraphics[width=.3\textwidth]{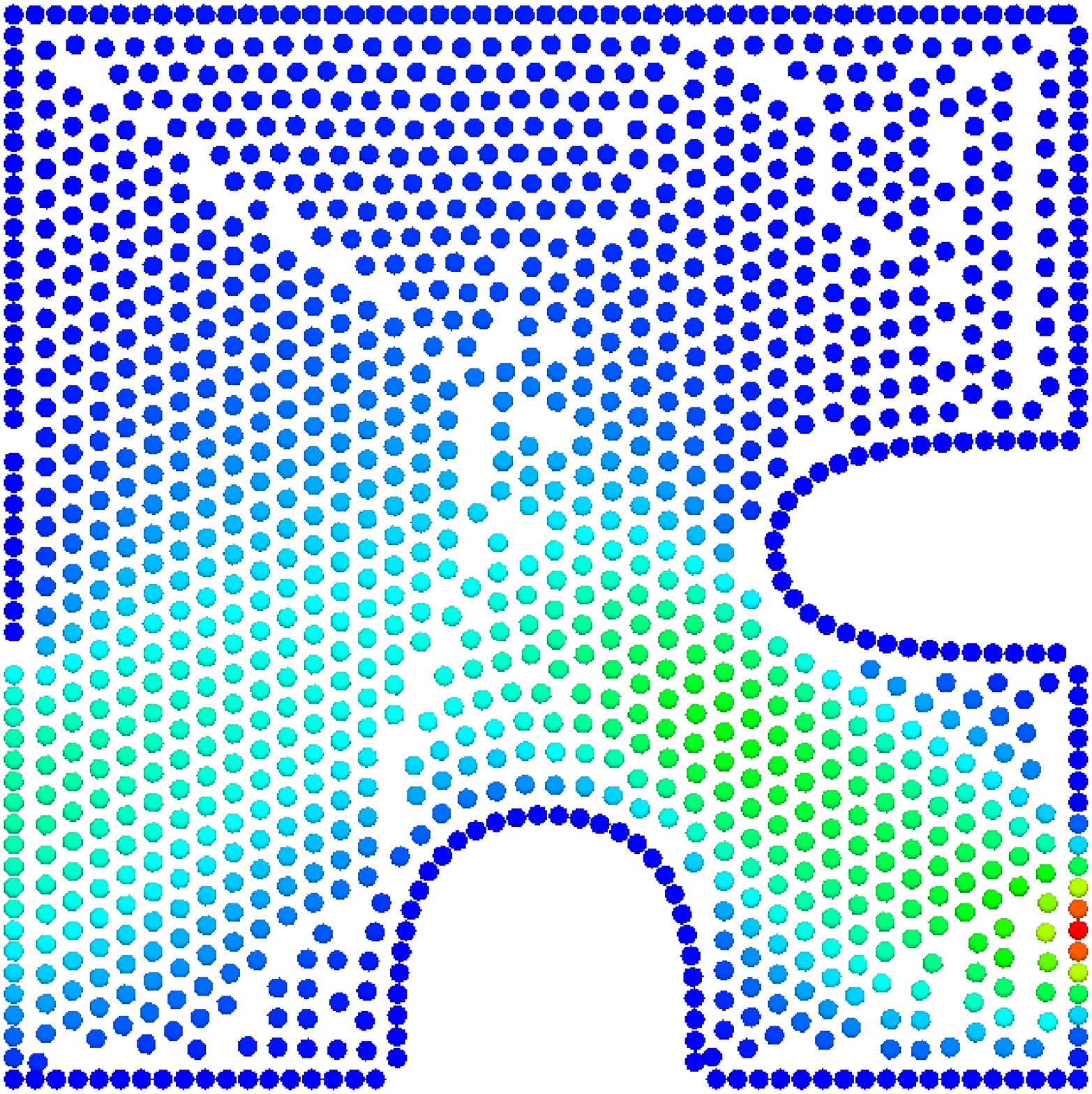}
 \includegraphics[width=.3\textwidth]{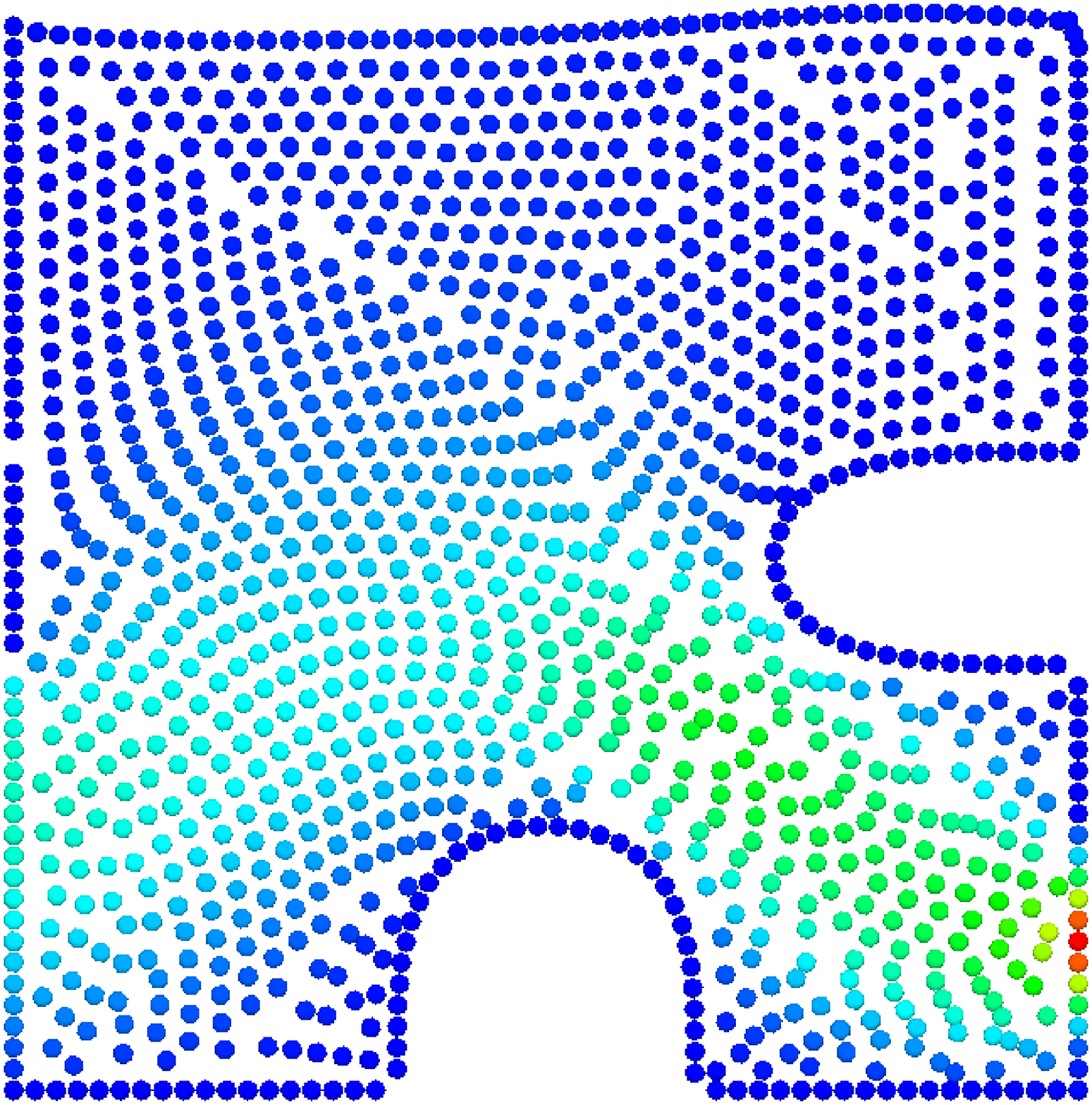}
 \includegraphics[width=.3\textwidth]{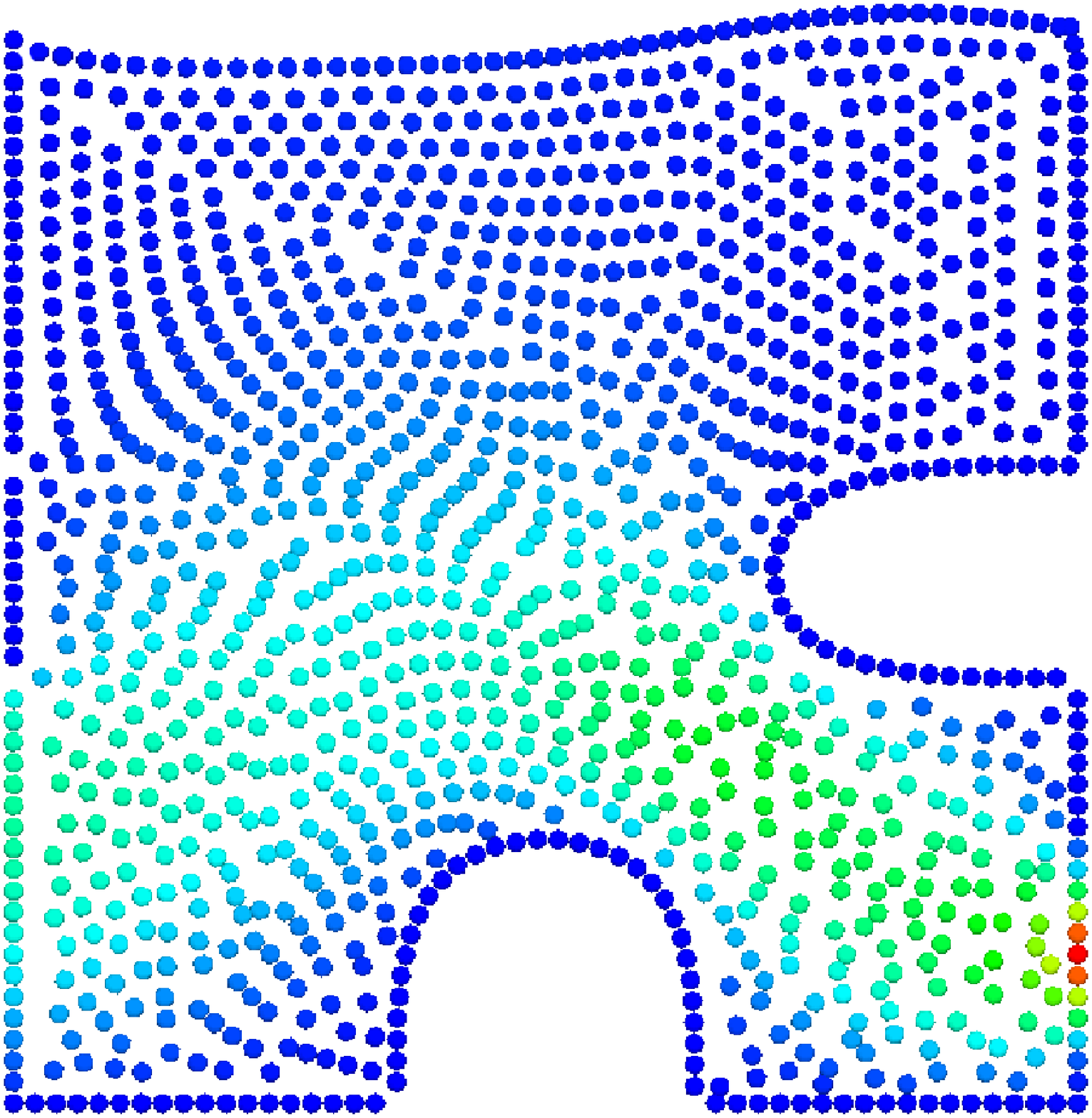}
 \caption{Filling of a liquid tank at time $0, 1.25, 2.5$ for the optimised case.}
 \label{num:fig:fillingOptim}
\end{figure}

\begin{figure}[p]
 \begin{minipage}{.46\textwidth}
  \centering
  \includegraphics[width=\textwidth]{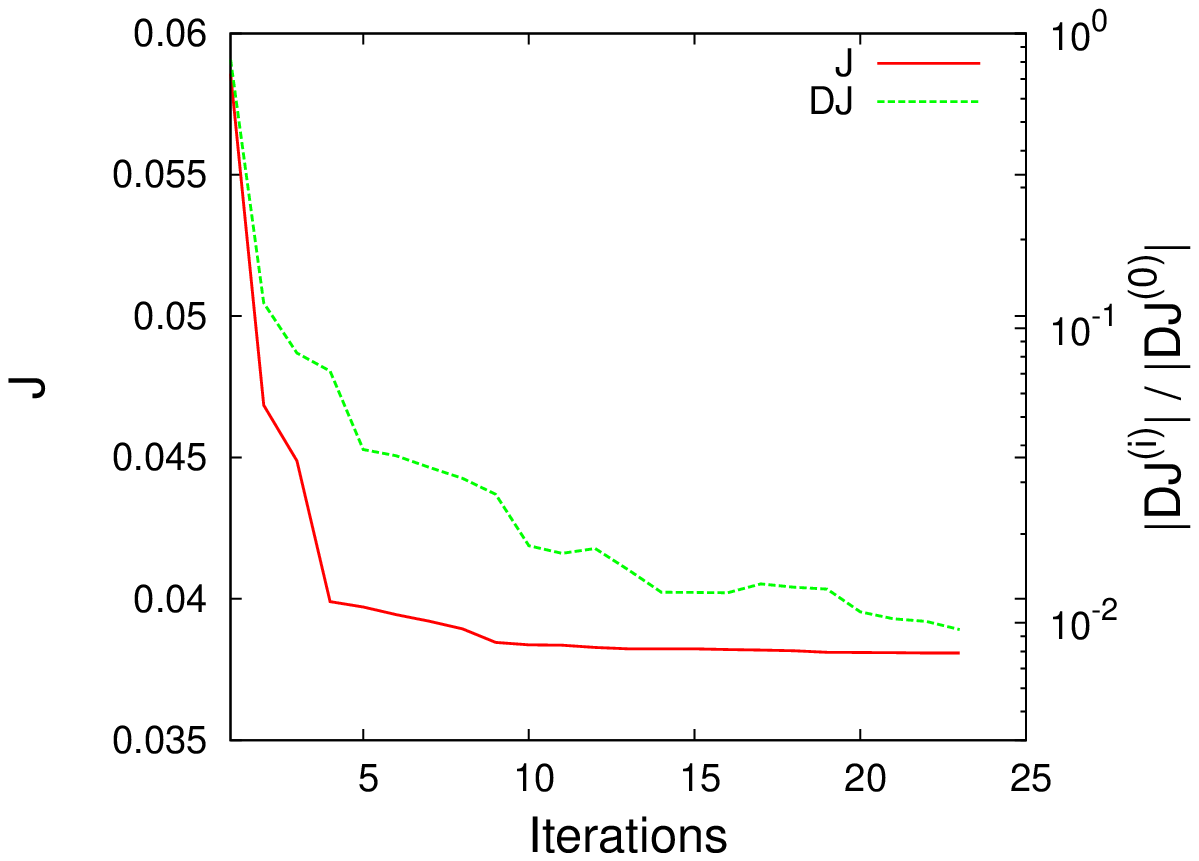}
  \caption{Cost functional and relative gradient norm.}
  \label{fig:ns_convergence}
 \end{minipage}
 \hspace*{8mm}
 \begin{minipage}{.46\textwidth}
  \centering
  \psfrag{Y}{\small $\|\vecb u\|_{L^2(\Gamma_f)}$}
  \includegraphics[width=\textwidth]{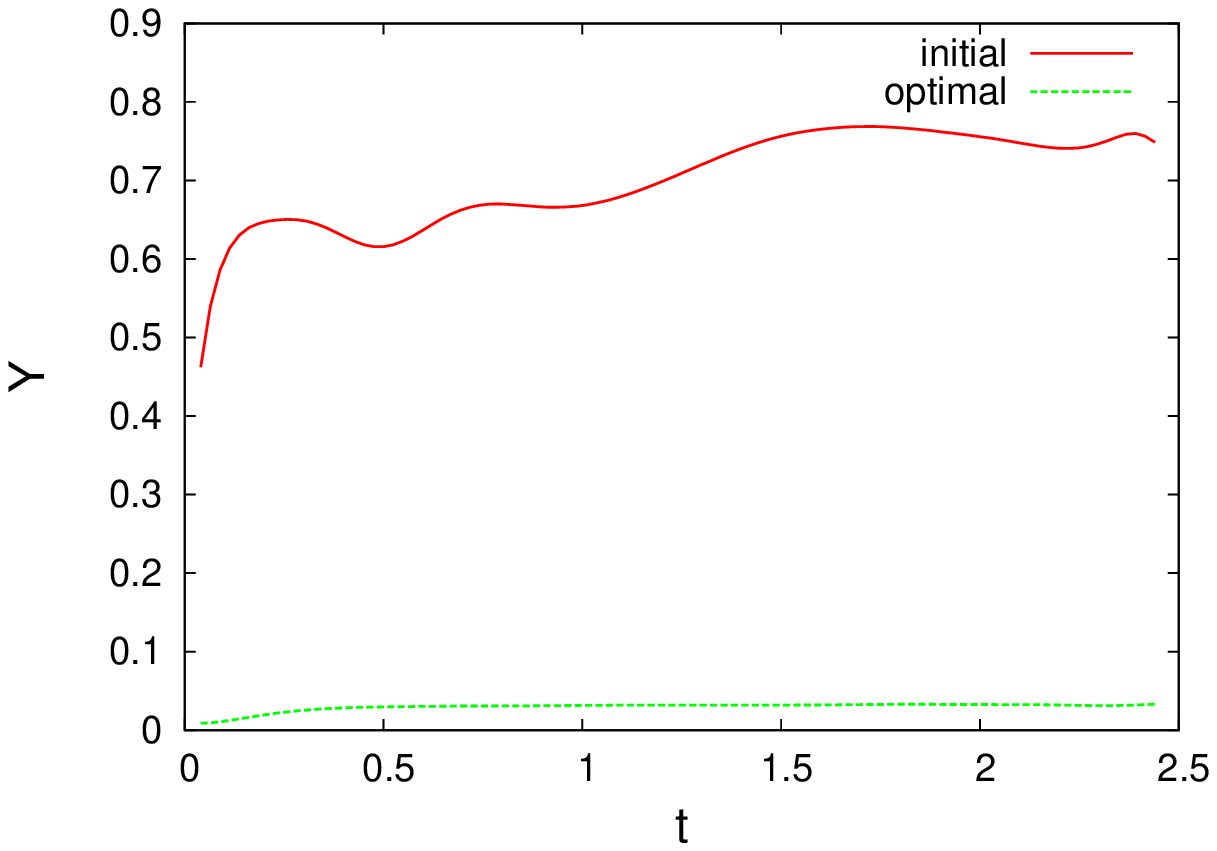}
  \caption[Mean surface velocity $\|\vecb u\|_{L^2(\Gamma_f)}$ over time.]
   {Mean surface velocity $\|\vecb u\|_{L^2(\Gamma_f)}$ over time for the uncontrolled and controlled case.}  
  \label{fig:ns_velosurf}
 \end{minipage}
\end{figure}

\begin{figure}[p]
 \centering
 \includegraphics[width=52mm]{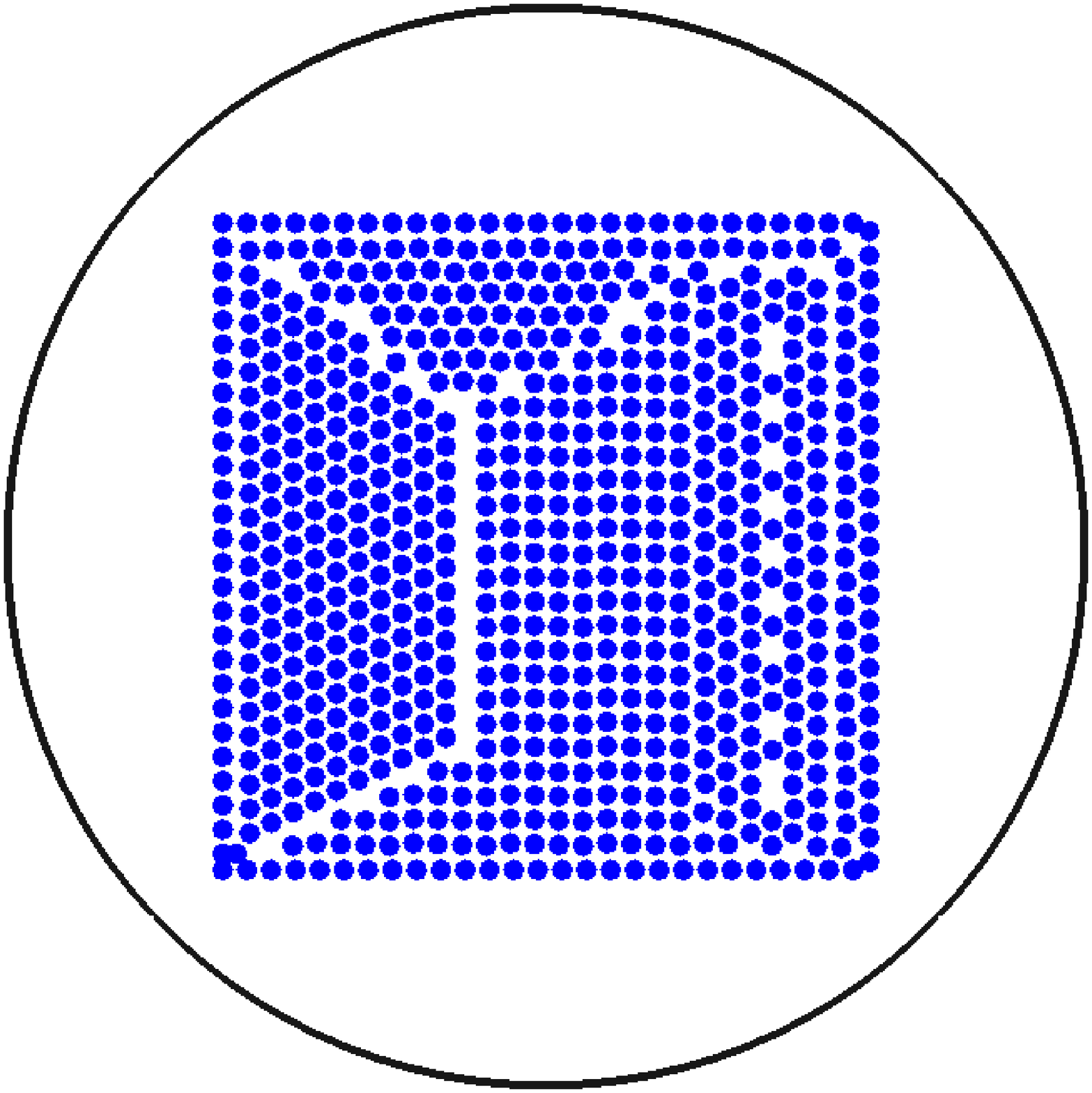}
 \includegraphics[width=52mm]{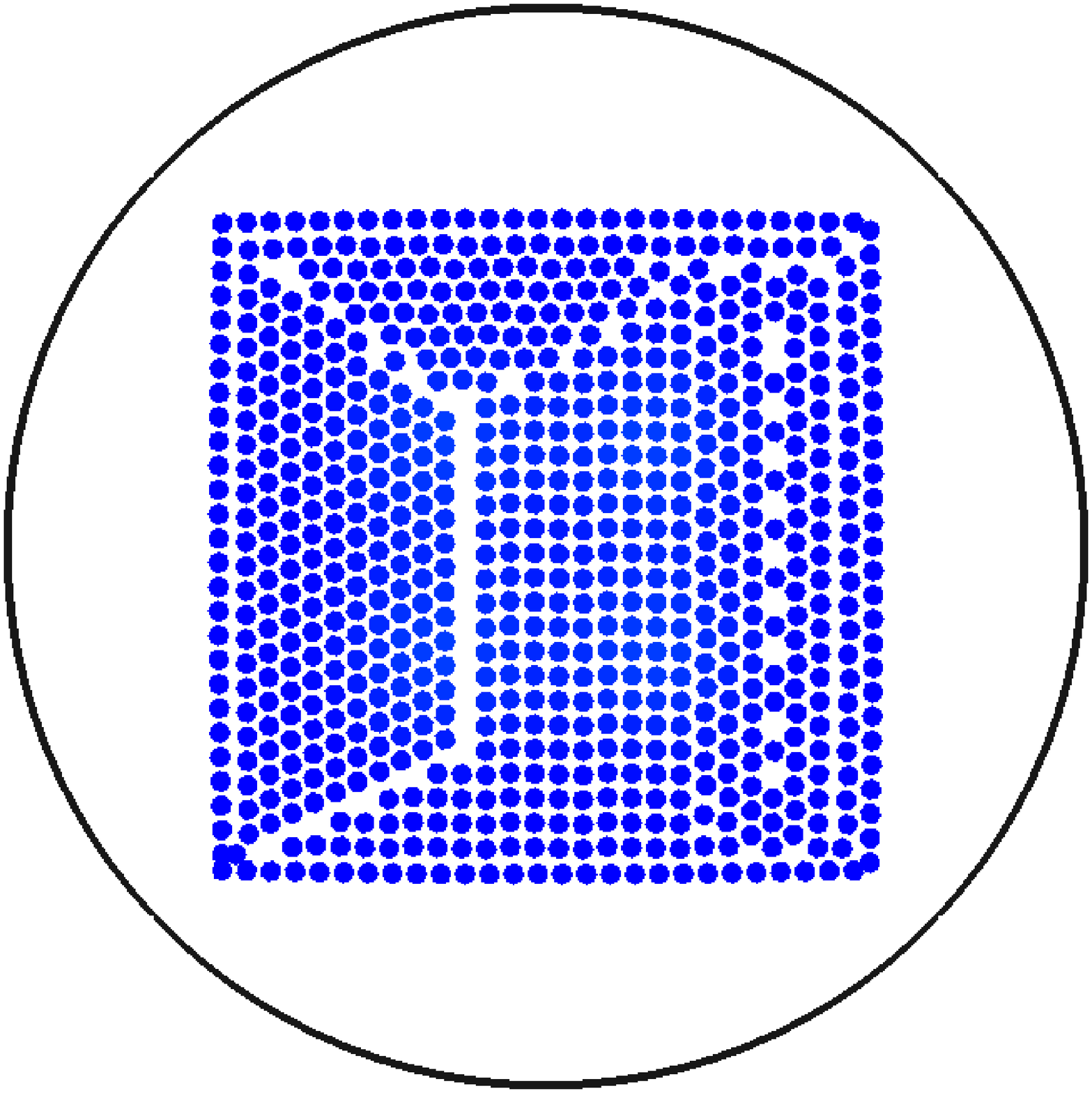}
 \includegraphics[width=52mm]{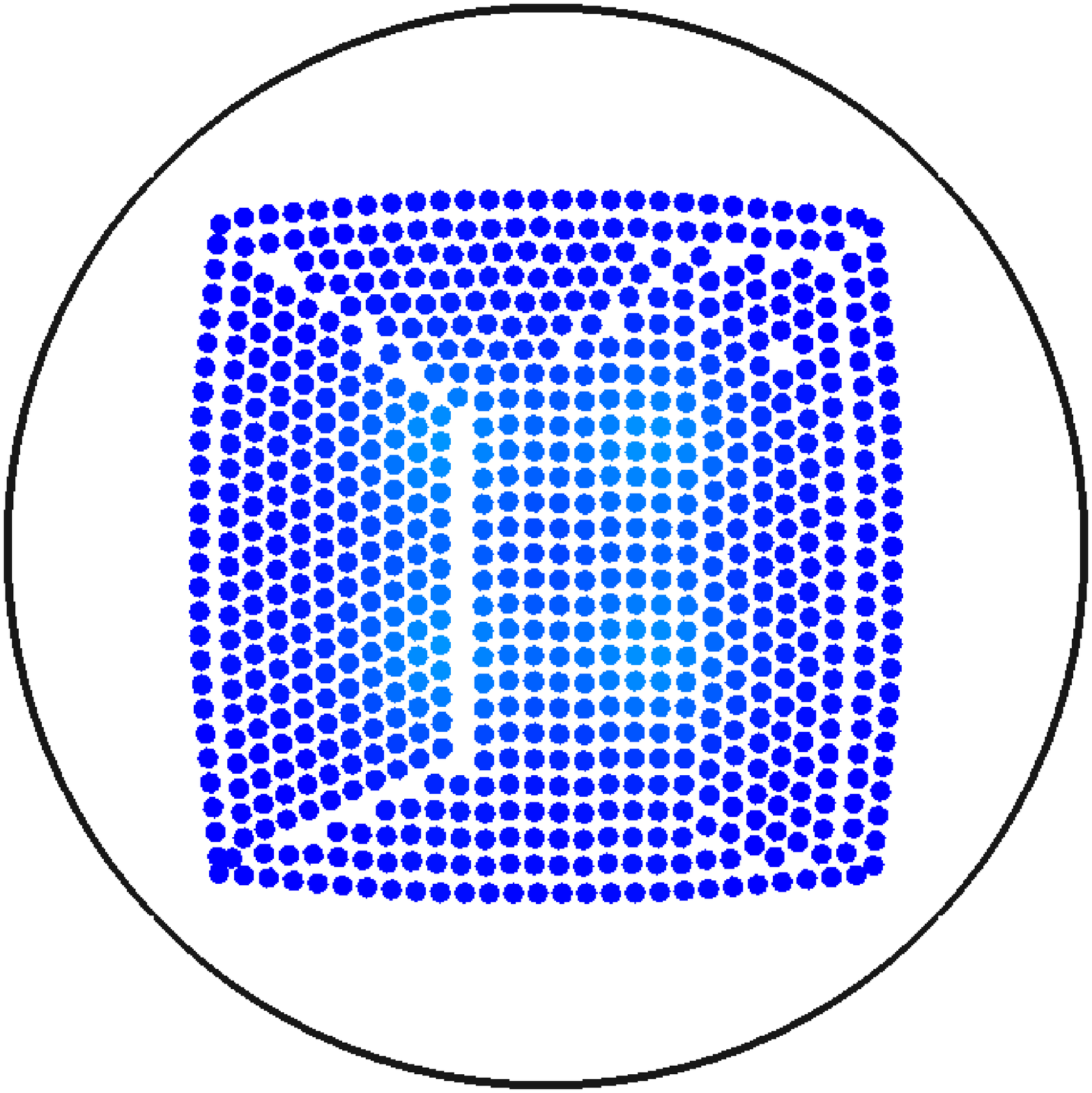} 
 \\[1ex] 
 \includegraphics[width=52mm]{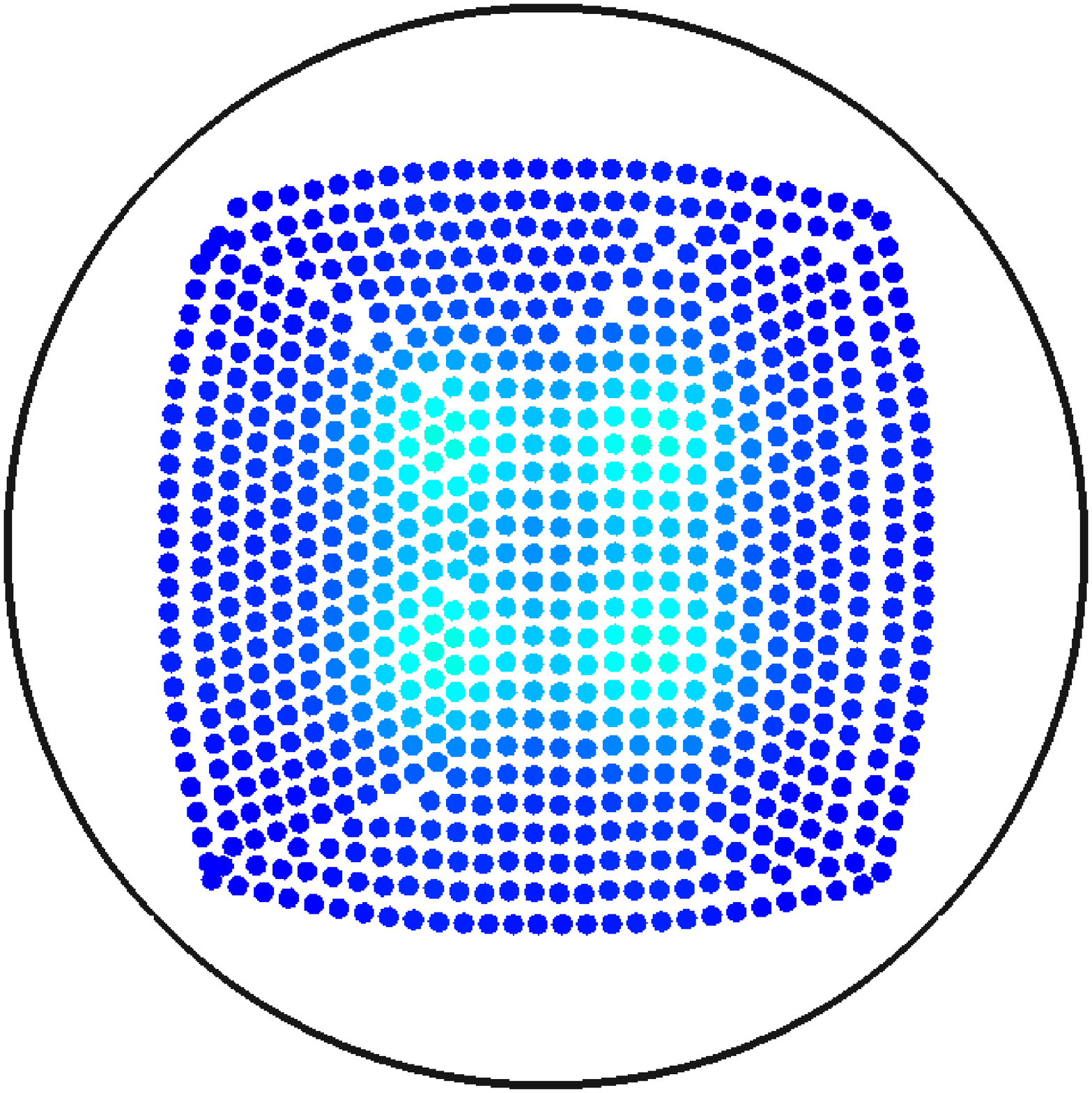}
 \includegraphics[width=52mm]{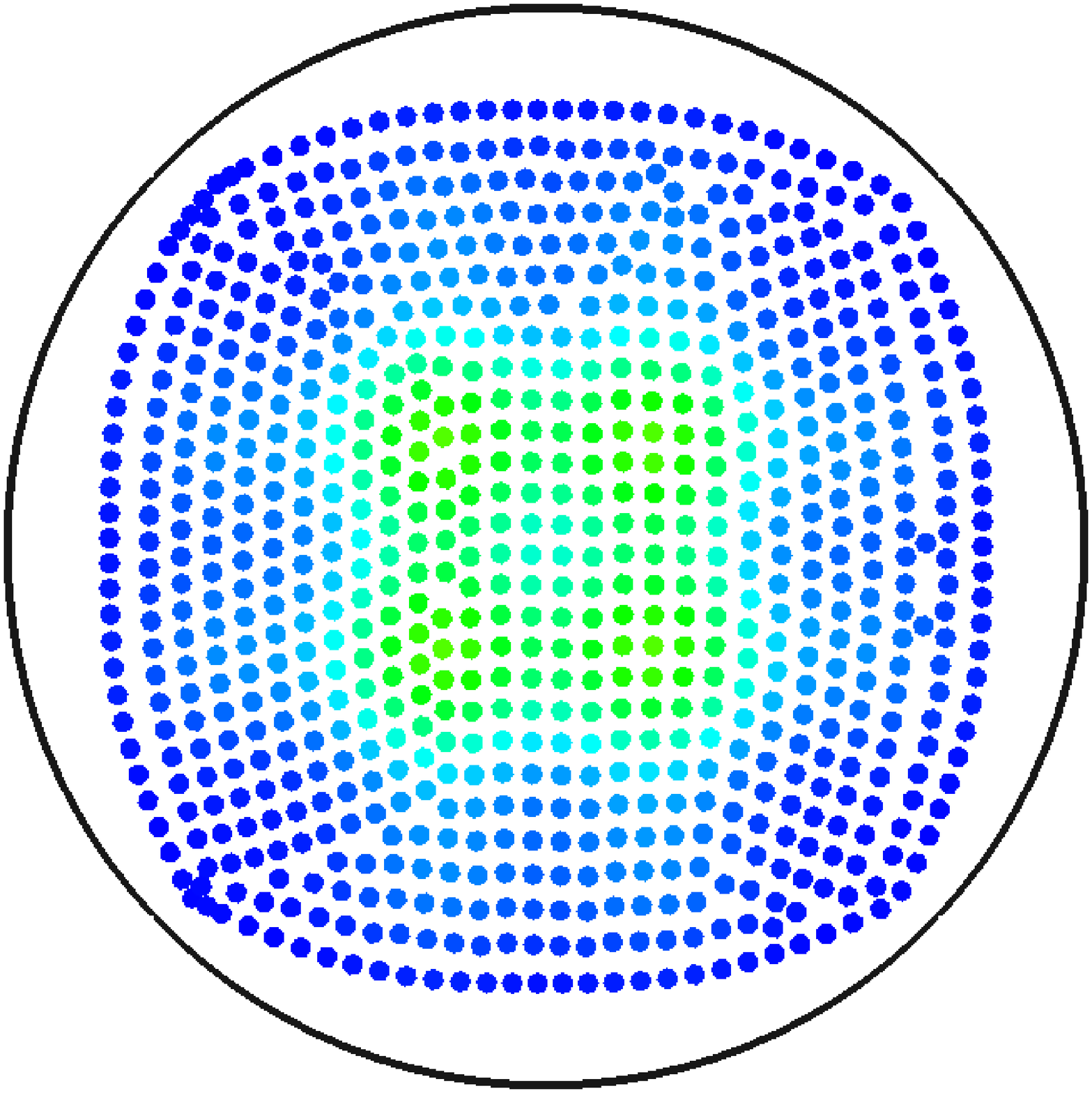}
 \includegraphics[width=52mm]{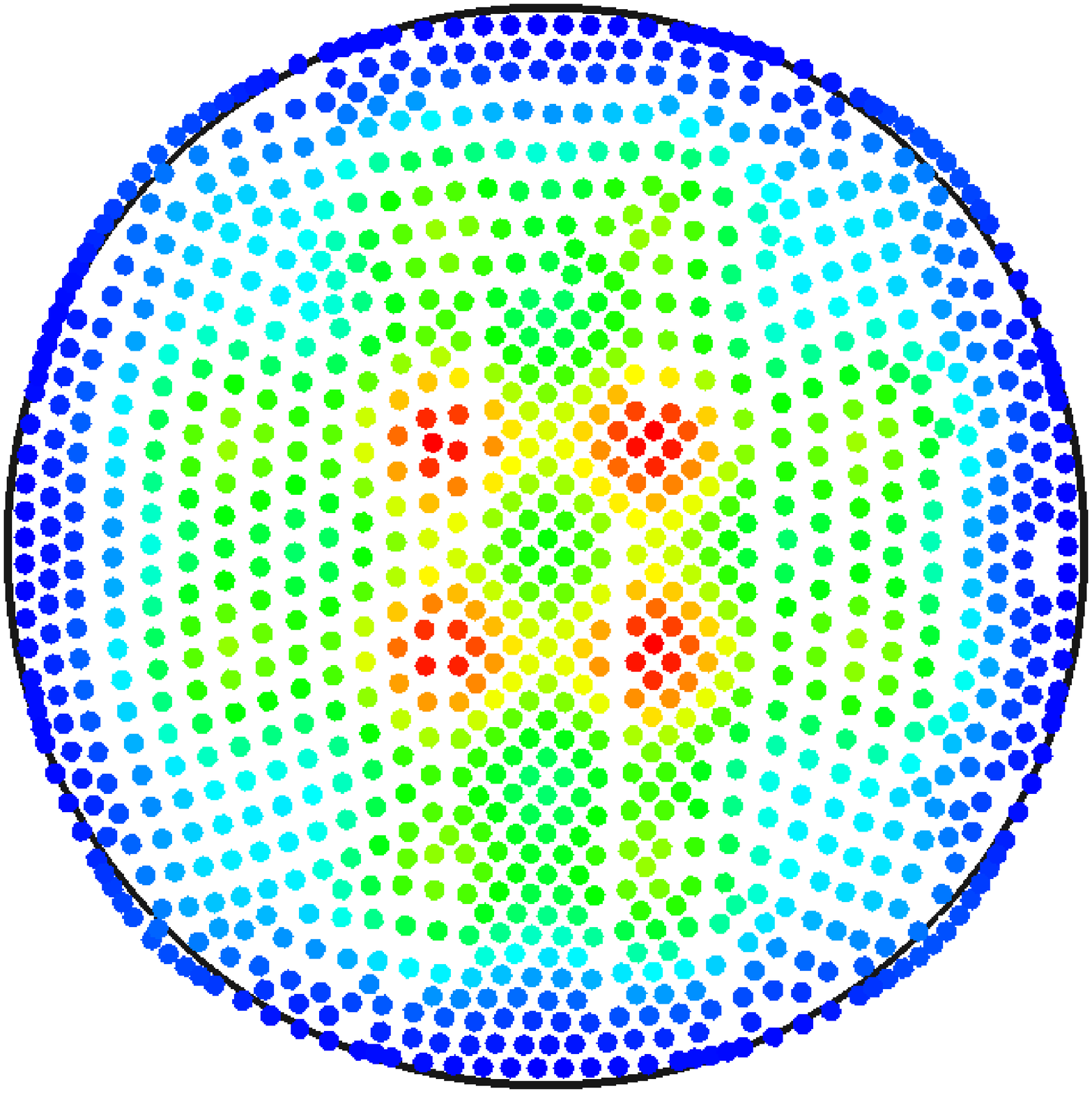}
 \caption[Optimal solution for $a=1.0,b=1.0$.]
 {Optimal solution for the Stefan problem with $a=1.0,b=1.0$. The time steps are equally spaces from $t=0$ to $T=0.3$.
 The colour represents the magnitude of the source and the black line the desired shape at final time.}
 \label{fig:stefanCircle}
\end{figure}

\begin{figure}[p]
 \centering
 \subfigure[Cost functional.]{
  \includegraphics[width=75mm]{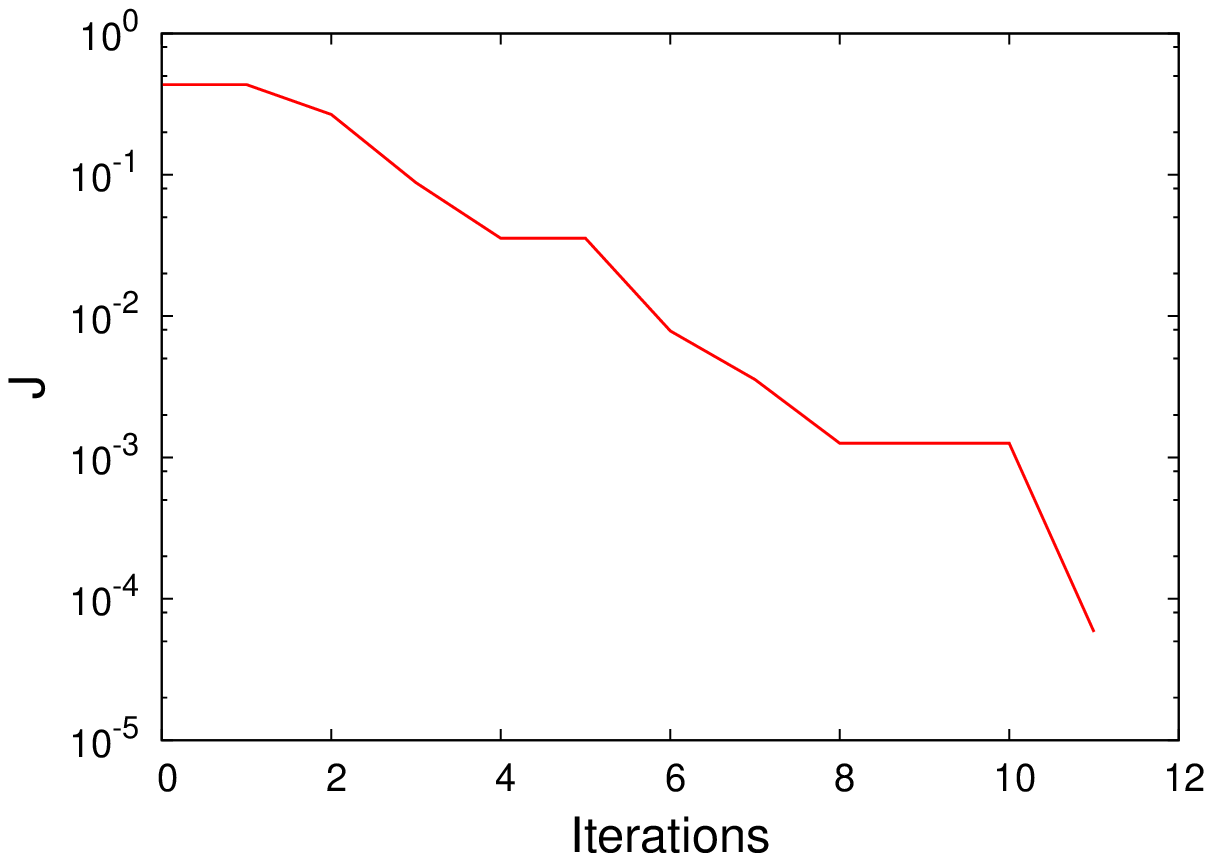}
 }
 \subfigure[Relative gradient norm.]{
  \includegraphics[width=75mm]{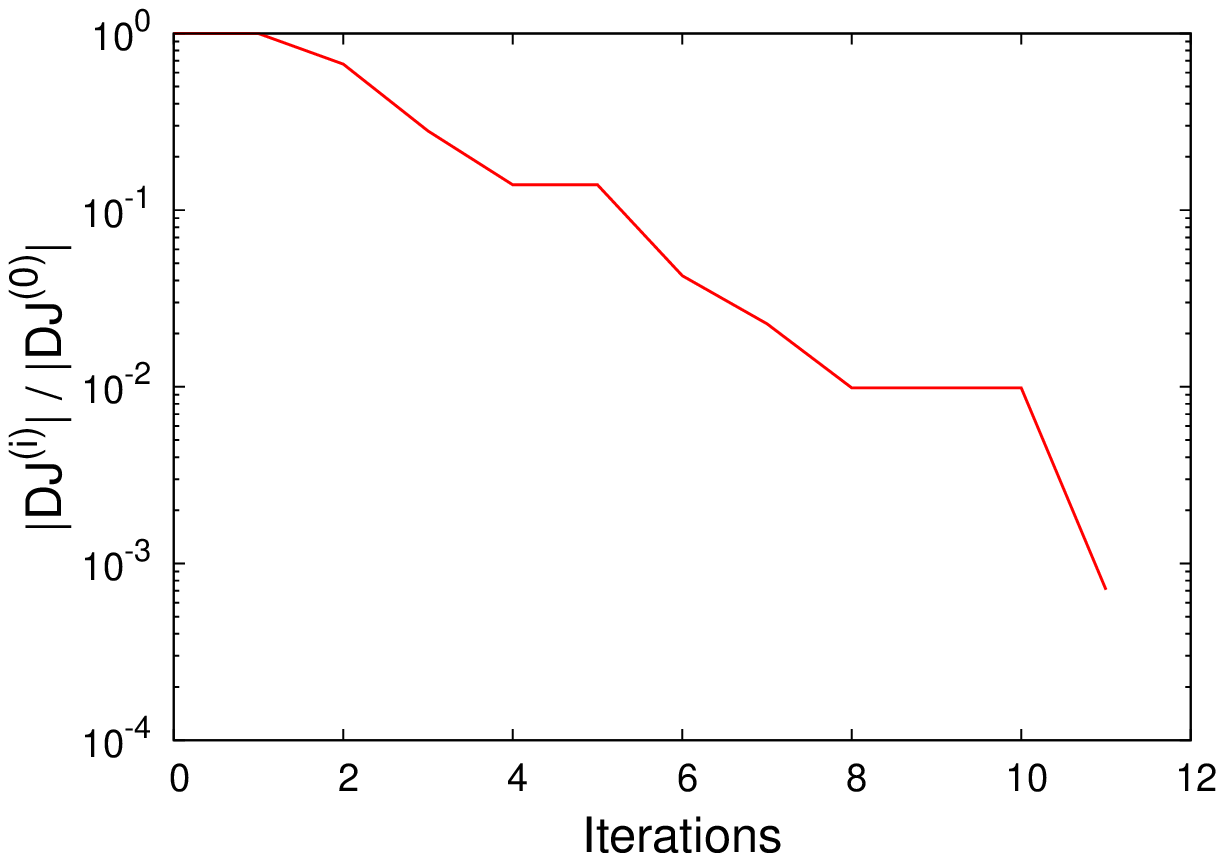}
 }
 \caption[Cost functional and gradient norm for $a=1.0,b=1.0$.]
  {Cost functional and gradient norm for $a=1.0,b=1.0$.}
 \label{fig:stefanCircleConv}
\end{figure}


\bibliography{Literature}

\begin{thebibliography}{10}

\bibitem{Beale3}
J.T. Beale.
\newblock {The initial value problem for the {Navier-Stokes} equations with a
  free surface}.
\newblock {\em Communications on Pure and Applied Mathematics}, 34:359--392,
  1981.

\bibitem{bernauer2010optimal}
M.K. Bernauer and R.~Herzog.
\newblock {Optimal control of the classical two-phase Stefan problem in level
  set formulation}.
\newblock {\em Preprint TU Chemnitz}, 2010.

\bibitem{ALE2}
H.~Braess and P.~Wriggers.
\newblock {Arbitrary Lagrangian Eulerian finite element analysis of free
  surface flow}.
\newblock {\em Computer Methods in Applied Mechanics and Engineering},
  190(1-2):95--109, 2000.

\bibitem{Chorin}
A.~Chorin.
\newblock {Numerical solution of the Navier-Stokes equations}.
\newblock {\em {J. Math. Comp.}}, 22:745--762, 1968.

\bibitem{colli2004free}
P.~Colli, C.~Verdi, and A.~Visintin.
\newblock {\em {Free boundary problems: Theory and applications}}.
\newblock {International Series of Numerical Mathematics}. Birkh{\"a}user
  Verlag, 2004.

\bibitem{Deuflhard}
P.~Deuflhard.
\newblock {\em {Newton methods for nonlinear problems: Affine invariance and
  adaptive algorithms}}.
\newblock {Springer series in computational mathematics}. Springer, 2004.

\bibitem{splittingMethods}
I.~Farag{\'o}.
\newblock {Splitting methods and their application to the abstract Cauchy
  problems}.
\newblock {\em Numerical Analysis and Its Applications}, pages 35--45, 2005.

\bibitem{Ferziger08}
J.H. Ferziger and M.~Peri{\'c}.
\newblock {\em {Numerische {S}tr{\"o}mungsmechanik}}.
\newblock Springer, 2008.

\bibitem{GODL1996}
E.~Godlewski and P.A. Raviart.
\newblock {\em {Numerical approximation of hyperbolic systems of conservation
  laws}}.
\newblock {Springer Verlag}, 1996.

\bibitem{hinze2007control}
M.~Hinze and S.~Ziegenbalg.
\newblock {Control of the free boundary in a two-phase Stefan problem with flow
  driven by convection}.
\newblock {\em Proceedings in Applied Mathematics and Mechanics}, 2007.

\bibitem{ibrahim2005liquid}
R.A. Ibrahim.
\newblock {\em {Liquid sloshing dynamics: Theory and applications}}.
\newblock Cambridge University Press, 2005.

\bibitem{kuhnertChorin}
J.~Kuhnert, M.~Sch{\"a}fer, and R.~Gerstenberger.
\newblock {Meshfree numerical scheme for time dependent problems in continuum
  mechanics}.
\newblock Preprint available at Fraunhofer ITWM.

\bibitem{MyPhd}
J.~Marburger.
\newblock {\em {Optimal control based on meshfree approximations}}.
\newblock PhD thesis, TU Kaiserslautern, 2011.

\bibitem{Raviart2}
S.~Mas-Gallic and P.~A. Raviart.
\newblock {A particle method for first-order symmetric systems}.
\newblock {\em {Numerische Mathematik}}, 51:323--352, 1987.

\bibitem{dissSabine}
S.~Repke.
\newblock {\em {Adjoint-based optimization approaches for stationary free
  surface flows}}.
\newblock PhD thesis, TU Kaiserslautern, 2011.

\bibitem{shyy2007computational}
W.~Shyy, H.S. Udaykumar, M.M. Rao, and R.W. Smith.
\newblock {\em {Computational fluid dynamics with moving boundaries}}.
\newblock {Dover Books on Engineering Series}. Dover Publications, 2007.

\bibitem{kuhnertGridfree}
S.~Tiwari and J.~Kuhnert.
\newblock {Grid free method for solving the Poisson equation}.
\newblock {\em Berichte des Fraunhofer ITWM}, 25, 2001.

\bibitem{Troel}
F.~Troeltzsch.
\newblock {\em {Optimale Steuerung partieller Differentialgleichungen}}.
\newblock Vieweg Verlag, 1st edition, 2005.

\bibitem{Visintin2008377}
A.~Visintin.
\newblock {Introduction to Stefan-type problems}.
\newblock {\em Handbook of Differential Equations: Evolutionary Equations},
  4:377--484, 2008.

\end{thebibliography}

\end{document}